\newtheorem{theorem}{Theorem}[section]
\newtheorem{corollary}[theorem]{Corollary}
\newtheorem{lemma}[theorem]{Lemma}
\newtheorem{proposition}[theorem]{Proposition}
\newtheorem{observation}[theorem]{Observation}
\theoremstyle{definition}
\newtheorem{definition}[theorem]{Definition}
\newtheorem{example}[theorem]{Example}
\newtheorem{remark}[theorem]{Remark}
\newcommand{\FR}[1]{\mathrm{FR}(#1)}
\newcommand{\Aut}[1]{\mathrm{Aut}(#1)}
\newcommand{\Out}[1]{\mathrm{Out}(#1)}
\newcommand{\Sym}[1]{\mathrm{Sym}(#1)}
\newcommand{\Inn}[1]{\mathrm{Inn}(#1)}
\author{Nils Leder\thanks{Supported by SFB 878 Groups, Geometry \& Actions.}}
\title{Serre's property FA for automorphism groups of free products}
\date{15.10.2018}
\begin{document}
\maketitle

\begin{abstract}
\noindent
We study the automorphism group $\Aut{G}$ of a free product $G$ of finite cyclic groups. We investigate the question in which cases $\Aut{G}$ has Serre's property FA.\\
In the case of two or three free factors, we prove that $\Aut{G}$ does not have property FA.  
However, if each free factor of $G$ occurs at least four times we show that $\Aut{G}$ does have property FA.
\end{abstract}


\section{Introduction}
\normalsize
In geometric group theory, it is a fruitful approach to study a group $G$ by considering its actions on geometric objects with nice properties. In this article, we investigate group actions on trees and the question whether a group $G$ has Serre's property FA, i.e. whether every action of $G$ on a simplicial tree has a global fixed point.

Although this notion is defined in geometric terms, it has strong algebraic consequences. By a theorem of Serre (see \S 6, Theorem 15 in \cite{S77}), a finitely generated group $G$ has property FA if and only if $G$ is not an amalgamated product and does not have a quotient isomorphic to $\mathbb{Z}$.

In particular, each finite group has property FA and therefore property FA only is an interesting feature in the case of infinite groups.

Considering the strong connection between free groups and their actions on trees, it is a very natural question to ask whether the automorphism group $\Aut{F_m}$ of a free group of rank $m$ has Serre's property FA. 
In \cite{B87}, Bogopolski showed that $\Aut{F_2}$ does not have Serre's property FA, but that $\Aut{F_m}$ satisfies property FA for all ranks $m \geq 3$.

In this paper, we study an analogous question for the case of a free product of finite cyclic groups, i.e. replacing each free factor $\mathbb{Z}$ of a free group by a finite cyclic group. Thus, we pick $m \geq 2, n_1, \ldots, n_m \geq 2$ and let $G= \ast_{i=1}^m \mathbb{Z}/n_i\mathbb{Z}$ be the free product of $m$ finite cyclic groups of order $n_i$ respectively. 
\begin{center}
\textbf{Main Question}: Does $\Aut{G}$ have Serre's property FA?
\end{center}
We first restrict to the \emph{pure} case where all the orders $n_i$ of the cyclic free factors agree.
Then, there also exists a sharp rank bound above which $\Aut{G}$ has property FA. In this case, the bound is $m=4$. Thus, we prove the following:

\begin{theorem}
Let $m,n  \geq 2$ and $G=\ast_{i=1}^m\mathbb{Z}/n\mathbb{Z}$ be the free product of $m$ copies of $\mathbb{Z}/n\mathbb{Z}$. Then, $\Aut{G}$ has property FA if and only if $m \geq 4$.
\end{theorem}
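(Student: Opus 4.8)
The plan is to treat the two implications separately, using throughout Serre's criterion (quoted in the introduction) together with two standard facts about actions on simplicial trees: every finite subgroup is elliptic (has a fixed point), and the local-to-global lemma of \cite{S77}, \S6 — if $g$, $h$ and $gh$ are each elliptic on a tree $T$ then $\mathrm{Fix}(g)\cap\mathrm{Fix}(h)\neq\emptyset$, so by the Helly property for subtrees a finitely generated group whose generators are pairwise ``jointly elliptic'' is itself elliptic. The structural input I would use is the Fouxe--Rabinovitch presentation of $\Aut{G}$: it is generated by the finite group $P\cong((\mathbb{Z}/n\mathbb{Z})^{\times})^{m}\rtimes S_{m}$ of automorphisms preserving the free-factor decomposition, together with the partial conjugations $\pi_{ij}$ ($i\neq j$) sending the chosen generator $a_{i}$ of the $i$-th factor to $a_{j}a_{i}a_{j}^{-1}$ and fixing all other factors; the $\pi_{ij}$ generate the normal subgroup $\FR{G}$, and $\Aut{G}/\FR{G}\cong P$ is finite. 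Useful elementary relations are $\mathrm{inn}(a_{j})=\prod_{i\neq j}\pi_{ij}$ and $\tau_{jk}\pi_{ij}\tau_{jk}^{-1}=\pi_{ik}$ for distinct $i,j,k$ (where $\tau_{jk}\in P$ transposes the two factors), and the observation that, for fixed $j$, the $\pi_{ij}$ commute, so $\mathcal{C}_{j}:=\langle\pi_{ij}:i\neq j\rangle\cong(\mathbb{Z}/n\mathbb{Z})^{m-1}$ is finite and contains $\mathrm{inn}(a_{j})$.

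For $m=2$ every $\pi_{ij}$ is inner, so $\FR{G}=\Inn{G}\cong G=\mathbb{Z}/n\mathbb{Z}\ast\mathbb{Z}/n\mathbb{Z}$ has finite index $|P|$ in $\Aut{G}$. Since $\mathbb{Z}/n\mathbb{Z}\ast\mathbb{Z}/n\mathbb{Z}$ is infinite and virtually free (a finite-index free subgroup is produced by the Kurosh subgroup theorem), $\Aut{G}$ is an infinite finitely generated virtually free group, hence the fundamental group of a non-trivial finite graph of finite groups, and therefore acts on a tree without a global fixed point; so $\Aut{G}$ fails FA.

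For $m=3$ this is no longer available since $\FR{G}\supsetneq\Inn{G}$ (one checks $\langle\pi_{12},\pi_{13}\rangle\cong\mathbb{Z}/n\mathbb{Z}\ast\mathbb{Z}/n\mathbb{Z}$ meets $\Inn{G}$ trivially, so $\Out{G}$ is infinite). Instead I would produce a fixed-point-free action of $\Aut{G}$ on a tree through the quotient $\Out{G}$: the natural candidate is the McCullough--Miller/Guirardel--Levitt deformation complex attached to the free product $G$, whose spine has dimension $m-2=1$ and is hence a tree, on which $\Out{G}$ acts simplicially, cocompactly, with finite stabilizers; an infinite group acting so on a tree cannot fix a vertex, so FA fails. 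Alternatively one can make this concrete by exhibiting two elliptic elements (a partial conjugation and a transposition, or two partial conjugations of different factors) whose fixed sets must be disjoint, which amounts to writing $\Aut{G}$ as a non-trivial amalgam.

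For $m\geq4$ I would prove directly that every action of $\Aut{G}$ on a tree $T$ has a global fixed point: $P$, each $\pi_{ij}$ and each $\mathcal{C}_{j}$ are elliptic (being finite), and the task is to assemble their fixed subtrees into a single common fixed point. The main obstacle — and the source of the rank threshold — is that typical products of Fouxe--Rabinovitch generators have \emph{infinite} order; for instance $\pi_{ij}\pi_{ik}$ conjugates the $i$-th factor by $a_{k}a_{j}$, which is of infinite order, so the local-to-global lemma cannot be applied to these generators naively. The crux is therefore to organise the generators into larger subgroups whose ellipticity can still be forced: pin down $\mathrm{Fix}(P)$, then incorporate the $\mathcal{C}_{j}$ one at a time, using the relations above so that successive fixed subtrees are pushed into meeting. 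Here the hypothesis $m\geq4$ is used to guarantee that for any relevant pair of generators (or of the subgroups built from them) there is always a ``spare'' free factor through which to conjugate, which is precisely what makes the required intersections non-empty; for $m\leq3$ that extra room is absent, consistent with the negative results above. I expect this last bootstrapping step — controlling the infinite subgroups generated by partial conjugations of a single factor and verifying that the count $m\geq4$ is exactly what makes the Helly intersections non-trivial — to be the main difficulty of the proof.
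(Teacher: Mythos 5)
Your negative directions are correct but follow a different route from the paper: for $m=2$ you invoke virtual freeness of $\Aut{G}$ and the structure theorem for finitely generated virtually free groups, and for $m=3$ the McCullough--Miller/Guirardel--Levitt spine of dimension $m-2=1$ with finite stabilisers, whereas the paper argues by hand, extending the Bass--Serre action of $\Inn{G}\cong G$ to $\Aut{G}\cong G\rtimes(F\rtimes\Sym{2})$ for $m=2$, and for $m=3$ computing $\FR{G}\cong G\rtimes G$ from the Collins--Gilbert presentation, so that $\Out{G}\cong G\rtimes(F\rtimes\Sym{3})$ acts without fixed point on an explicit Bass--Serre tree. Your versions are fine provided you actually justify the facts you cite (in particular that the $m=3$ spine is a tree on which the full $\Out{G}$ acts with \emph{finite} vertex stabilisers, which does hold here because the factors are finite), but they buy the conclusion from heavier machinery rather than elementary structure theory.

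The genuine gap is the direction that carries the theorem: for $m\geq 4$ you do not give a proof, only a plan (``pin down $\mathrm{Fix}(P)$, then incorporate the $\mathcal{C}_j$ one at a time, using the relations above so that successive fixed subtrees are pushed into meeting''), and you yourself flag this bootstrapping as the unresolved main difficulty. The missing idea is a concrete mechanism for forcing two \emph{non-commuting} partial conjugations $\pi_{ij},\pi_{ik}$ (which, as you note, generate an infinite group) to be jointly elliptic; ellipticity of $P$ and of the finite abelian groups $\mathcal{C}_j$ gives no control over these intersections, and the local-to-global lemma is unavailable exactly because $\pi_{ij}\pi_{ik}$ has infinite order. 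The paper supplies this mechanism with a Diagonal (Subtree Cycle) Lemma: if subtrees $A_1,A_2,B_1,B_2$ satisfy $A_i\cap B_j\neq\emptyset$ for all $i,j$, then $A_1\cap A_2\neq\emptyset$ or $B_1\cap B_2\neq\emptyset$. One applies this to fixed trees of four partial conjugations chosen, using the spare index guaranteed by $m\geq 4$, so that all four ``side'' pairs commute; then a permutation automorphism $\pi$ conjugates the pair $(a_1,a_2)$ to $(b_1,b_2)$, so $\mathrm{Fix}(b_k)=\pi\,\mathrm{Fix}(a_k)$ and the two diagonal intersections are simultaneously empty or non-empty, hence both non-empty. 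A short case analysis then shows \emph{any} two partial conjugations have a common fixed point, Helly makes $\FR{G}$ elliptic, and the finite complement $F\rtimes\Sym{m}$ fixes a vertex of $T^{\FR{G}}$, giving the global fixed point. Without this (or an equivalent) argument your proposal does not establish property FA for $m\geq 4$.
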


For $n=2$ the free product $G=\ast_{i=1}^m \mathbb{Z}/2\mathbb{Z}$ is also known as the universal Coxeter group of rank $m$. In this special situation, the result has been proven by Varghese in \cite{V18} (cf. Corollary B therein).

From the proof of Theorem 1.1 we obtain a decomposition of $\Aut{G}$ resp. $\Out{G}$ as an amalgamated product of finite groups in the cases $m =2$ resp. $m=3$.

We use Theorem 1.1 and a certain reduction technique to obtain the following two results for the \emph{mixed} case of free factors with different orders.

\begin{theorem}
Let $G=\ast_{i=1}^m \mathbb{Z}/n_i\mathbb{Z}$ be a free product of at least two non-trivial finite cyclic groups. If either a free factor $\mathbb{Z}/n\mathbb{Z}$ occurs exactly two or three times or if two different free factors $\mathbb{Z}/n_1\mathbb{Z}$ and $\mathbb{Z}/n_2\mathbb{Z}$ occur exactly once, then $\Aut{G}$ does not have FA.
\end{theorem}

\begin{theorem}
Let $G=\ast_{i=1}^m \mathbb{Z}/n_i\mathbb{Z}$ be a free product of finite cyclic groups. If each free factor $\mathbb{Z}/n\mathbb{Z}$ occurs at least four times, then $\Aut{G}$ has FA.
\end{theorem}

Note that the number of occurencies of a given finite cyclic group in the decomposition of $G$ as a free product of finite cyclic groups is an invariant of the group $G$. A proof of this fact is given later in Lemma 6.2.

The Theorems 1.2 and 1.3 treat many cases of mixed free products. The only remaining open case is when one free factor occurs once and all other free factors occur at least four times, e.g. $G \cong \mathbb{Z}/2\mathbb{Z}^{\ast 4} \ast \mathbb{Z}/3\mathbb{Z}$.\\ 
\newline
The paper is organised in five sections. In Section 2, we present the preliminaries on property FA, automorphisms of free products of finite cyclic groups, characteristic subgroups and the extension of actions to semi-direct products.

The absence of property FA in the case $m=2$ of Theorem 1.1 is proven in Section 3 by extending the (fixed point-free) action of $\Inn{G} \cong G$ on its Bass-Serre tree to the whole automorphism group $\Aut{G}$.

For $m=3$ in Theorem 1.1, we investigate in Section 4 the structure of $\Aut{G}$ in that case and then construct a fixed-point free action of $\mathrm{Out}(G)$ on a simplicial tree.

In Section 5, we prove the positive statement of Theorem 1.1 for $m \geq 4$. The proof requires the so called Subtree Cycle Lemma which allows us to prove that the Fouxe-Rabinovitch subgroup $\FR{G}$ of $\Aut{G}$ has a global fixed point whenever $\Aut{G}$ acts on a simplicial tree. This result immediately implies that the whole automorphism group $\Aut{G}$ has property FA.

The Section 6 finally treats the mixed case of free products of finite cyclic groups of different orders and includes the proofs of Theorems 1.2 and 1.3.

The proof of Theorem 1.2 makes use of Theorem 1.1 and the reduction by characteristic subgroups. The proof of Theorem 1.3 generalises that one of {Theorem 1.1} in the case $m \geq 4$.

\hfill \\
This article is part of my PhD thesis. I like to thank Olga Varghese for inspiring discussions and many helpful comments on this paper.

\section{Preliminaries}

We start by introducing Serre's property FA and providing some basic results. For further reading we recommend the monograph \emph{Trees} by Serre (\cite{S77}).

\subsection{Property FA}
The following definition is due to Serre (cf. \S 6.1, p.58 in \cite{S77}). The notation FA is an abbreviation for the french expression "fixe arbre" which means "fixing a tree". (This terminology is motivated by the fact that given a group action on a tree, the fixed point set is either empty or a subtree.)

\begin{definition} \hfill
\begin{itemize}
\item[$i)$] A group $G$ acts \emph{without inversion} on a tree $T$ if whenever $g \in G$ stabilises an edge $e \in E(T)$, $g$ fixes both endpoints of $e$ (in other words: $g$ fixes $e$ pointwise).
\item[$ii)$] A group $G$ is said to have \emph{property FA} if for every simplicial action without inversion of $G$ on a tree $T$ there is a global fixed point, i.e. there is a vertex $x \in V(T)$ such that $g(x)=x$ for all $g \in G$.
\end{itemize}
\end{definition}

\begin{example} \hfill
\begin{itemize}
\item[a)] Any finite group has property FA. More general, any finitely generated torsion group has property FA (cf. Example 6.3.1 in \cite{S77}).
\item[b)] For $m \geq 3$ the linear group $\mathrm{GL}_{m}(\mathbb{Z})$ and the automorphism group $\Aut{F_m}$ have property FA (cf. \cite{V02} for linear groups and \cite{B87} for free groups).
\item[c)] The group of integers $\mathbb{Z}$ acts by translations on the two-sided infinite line graph. This action is fixed point-free and hence $\mathbb{Z}$ does not have property FA (cf. proof of Theorem 15, p.58 in \cite{S77}).
\item[d)] A non-trivial amalgamated product $G=A \ast_C B$ does not have FA since $G$ acts without a global fixed point on its Bass-Serre tree (cf. \S 4, Theorem 7 in \cite{S77}). 
\end{itemize}
\end{example}

The following observation shows that property FA is inherited by quotients and behaves well with respect to group extensions.

\begin{observation}\label{obs:FA} \hfill \\
Let $G$ be a group and $N \subseteq G$ a normal subgroup.
\begin{itemize}
\item[$i)$] If $G$ has property FA, then so does the quotient $G/N$.
\item[$ii)$] If both $N$ and $G/N$ have property FA, then also $G$ satisfies FA.
\end{itemize}
\end{observation}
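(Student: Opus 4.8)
The plan is to use the two standard constructions for relating group actions on trees under quotients: pulling an action back along a surjection, and passing to the fixed subtree of a normal subgroup.

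For part $i)$, I would start with an arbitrary action without inversion of $G/N$ on a tree $T$ and compose it with the canonical projection $\pi\colon G \to G/N$ to obtain an action of $G$ on the same tree $T$. This action is again without inversion, since it factors through the $G/N$-action: if $g \in G$ stabilises an edge, then $\pi(g)$ stabilises it and hence fixes its endpoints, so $g$ does too. Property FA of $G$ now supplies a vertex $x \in V(T)$ with $g\cdot x = x$ for all $g \in G$, i.e. $\pi(g)\cdot x = x$ for all $g \in G$. Since $\pi$ is surjective, $x$ is fixed by every element of $G/N$, which proves $i)$.

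For part $ii)$, let $G$ act on a tree $T$ without inversion. Restricting to $N$ gives an action of $N$ on $T$ without inversion, so by property FA of $N$ the set $T^N$ of $N$-fixed points is non-empty; as the fixed-point set of a group acting without inversion, it is a subtree of $T$. The key observation is that $T^N$ is $G$-invariant: for $x \in T^N$ and $g \in G$, normality of $N$ gives, for every $n \in N$,
\[
n\cdot(g\cdot x) = g\cdot\bigl((g^{-1}ng)\cdot x\bigr) = g\cdot x ,
\]
since $g^{-1}ng \in N$ fixes $x$. Hence $G$ acts on the tree $T^N$, and because $N$ acts trivially on $T^N$ this action descends to an action of $G/N$ on $T^N$. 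This induced action is still without inversion: if a class $\bar g \in G/N$ stabilises an edge of $T^N$, then any lift $g \in G$ stabilises that edge in $T$ and therefore fixes its endpoints, so $\bar g$ fixes them as well. Applying property FA of $G/N$ to this action yields a vertex of $T^N$ fixed by all of $G/N$, and such a vertex is then fixed by all of $G$.

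I do not expect a genuine obstacle here; the only points requiring (routine) care are that fixed-point sets of without-inversion actions are subtrees, and that both restricting to an invariant subtree and passing to the quotient action preserve the without-inversion hypothesis. Once these are noted, both statements follow immediately from the definitions.
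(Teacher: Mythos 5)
Your proof is correct; the paper itself gives no argument but simply cites Serre's Examples 6.3.1 and 6.3.2, and your two constructions (pulling the action back along $G\to G/N$, and descending to the $G/N$-action on the invariant fixed subtree $T^N$) are exactly those standard arguments, with the without-inversion bookkeeping handled properly.
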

(cf. Examples 6.3.2 and 6.3.1 in \cite{S77})\\

A useful feature of property FA is that it can be checked by studying some suitable generating set of the group under consideration. This is a consequence of Helly's Theorem for Trees. 

\begin{theorem}[Helly's Theorem for Trees]\label{theorem:helly} \hfill \\
Let $T$ be a tree and $A_1, \ldots, A_k$ be subtrees of $T$ such that $A_i \cap A_j \neq \emptyset$ for all $i,j \in \{1, \ldots, k\}$. Then the intersection $\bigcap\limits_{i=1}^{k}A_i$ is non-empty.
\end{theorem}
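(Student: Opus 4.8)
The plan is to argue by induction on the number $k$ of subtrees. The cases $k=1$ and $k=2$ are immediate (for $k=2$ the hypothesis $A_1 \cap A_2 \neq \emptyset$ is already the conclusion), so the substance of the argument sits in two places: the case $k=3$, and the reduction of the general inductive step to it. Throughout, "subtree" should be read as "non-empty connected subgraph", and for vertices $x,y$ I write $[x,y]$ for the unique reduced path in $T$ joining them.

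First I would record two elementary facts about a simplicial tree $T$. (i) If $A$ is a subtree and $x,y \in A$, then $[x,y] \subseteq A$; consequently, if $A$ and $B$ are subtrees with $A \cap B \neq \emptyset$, then $A \cap B$ is again a subtree, since it is non-empty by assumption and connected because for any two of its vertices the connecting path lies in both $A$ and $B$. (ii) For any three vertices $a,b,c$ there is a unique vertex $m = m(a,b,c)$, the \emph{median}, lying simultaneously on $[a,b]$, $[b,c]$ and $[a,c]$; this follows from the observation that $[a,b] \cap [a,c]$ is an initial segment $[a,m]$ of both of these paths and that then $[b,c] = [b,m] \cup [m,c]$, both consequences of the uniqueness of reduced paths in a tree.

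With (ii) available, the case $k=3$ is short. Given subtrees $A,B,C$ that pairwise intersect, choose vertices $a \in B \cap C$, $b \in A \cap C$, $c \in A \cap B$, and set $m = m(a,b,c)$. Since $b,c \in A$, fact (i) gives $[b,c] \subseteq A$, and $m \in [b,c]$, hence $m \in A$; symmetrically $m \in [a,c] \subseteq B$ and $m \in [a,b] \subseteq C$, so $m \in A \cap B \cap C$. For the inductive step, assume $k \geq 4$ and the statement for $k-1$ families. Given pairwise-intersecting subtrees $A_1,\dots,A_k$, put $B_i := A_i \cap A_k$ for $i=1,\dots,k-1$. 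By (i) each $B_i$ is a subtree, and for $i \neq j$ we have $B_i \cap B_j = A_i \cap A_j \cap A_k \neq \emptyset$ by the case $k=3$ applied to $A_i,A_j,A_k$. Thus the $B_i$ are pairwise-intersecting subtrees, and the induction hypothesis produces a vertex in $\bigcap_{i=1}^{k-1} B_i = \bigcap_{i=1}^{k} A_i$.

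The only genuinely geometric ingredient is the existence and uniqueness of the median, i.e. fact (ii); once that is in place everything else is the bookkeeping above. Accordingly, the step I would be most careful to write out cleanly is (ii), together with the mild but essential observation in (i) that intersections of pairwise-meeting subtrees stay within the class of objects to which the induction applies — this is exactly why the pairwise hypothesis is the right one. (If one works with geometric realisations and allows the fixed "point" to be the midpoint of an edge, the same argument goes through verbatim after subdividing once, or one simply keeps the combinatorial formulation above.)
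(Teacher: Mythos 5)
Your proof is correct: the median argument settles the case of three subtrees, and the reduction $B_i := A_i \cap A_k$ together with the observation that $B_i \cap B_j = A_i \cap A_j \cap A_k \neq \emptyset$ (by the three-subtree case) makes the induction go through, with the intersection of the $B_i$ equal to $\bigcap_{i=1}^{k} A_i$. The paper does not prove this theorem itself but cites Serre's \emph{Trees} (\S 6, Lemma 10), and your argument is essentially that classical one (median of three points plus induction), so nothing further is needed.
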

(see \S 6, Lemma 10, p.65 in \cite{S77} or Theorem 3.2 in \cite{F13} for a more general topological statement)

\begin{corollary}\label{corollary:helly}
Let $G$ be a group acting on a tree $T$. Let $\{s_1, \ldots, s_k\}$ be a finite set of generators of $G$ such that for all $i,j \in \{1, \ldots, k\}$ the generators $s_i$ and $s_j$ have a common fixed point in $T$. Then, $G$ has a global fixed point in $T$.
\end{corollary}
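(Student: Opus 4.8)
The plan is to deduce this directly from Helly's Theorem for Trees (Theorem~\ref{theorem:helly}), taking the subtrees $A_i$ to be the fixed-point sets of the generators $s_i$.

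First I would record that for each $i \in \{1,\dots,k\}$ the set $A_i := \{x \in T : s_i(x) = x\}$ is a non-empty subtree of $T$. It is non-empty because the hypothesis, applied with $i = j$, says in particular that $s_i$ has a fixed point. It is convex, hence connected: if $s_i$ fixes two points $x,y \in T$, then since $T$ is a tree the geodesic $[x,y]$ is the unique arc between them, so it is $s_i$-invariant with both endpoints fixed and therefore fixed pointwise. Finally, since the action is simplicial and without inversion, $A_i$ is in fact a subcomplex (if $s_i$ fixed an interior point of an edge $e=\{u,v\}$ without fixing $u$ or $v$, it would exchange $u$ and $v$, an inversion); in particular each $A_i$ contains a vertex.

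Next, the hypothesis is precisely that $A_i \cap A_j \neq \emptyset$ for all $i,j \in \{1,\dots,k\}$, so Theorem~\ref{theorem:helly} yields a point in $\bigcap_{i=1}^{k} A_i$. Since this intersection is again a non-empty subcomplex of $T$, I may pick a \emph{vertex} $x \in \bigcap_{i=1}^{k} A_i$. Then $s_i(x) = x$ for every $i$, and since $\{s_1,\dots,s_k\}$ generates $G$, it follows that $g(x) = x$ for all $g \in G$, so $x$ is a global fixed point, as required.

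The argument is essentially routine; the only step that needs a moment's care is the passage from a common fixed \emph{point} to a common fixed \emph{vertex}, which is exactly where the standing assumption that the action is without inversion is used.
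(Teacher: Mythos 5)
Your proof is correct and follows exactly the paper's argument: apply Helly's Theorem for Trees to the fixed-point sets $A_i = \mathrm{Fix}(s_i)$, which are non-empty subtrees with pairwise non-empty intersections, and let the generators fix a common vertex. The extra details you supply (convexity of fixed-point sets, using the no-inversion hypothesis to pass from a fixed point to a fixed vertex) are welcome elaborations of steps the paper leaves implicit, but the approach is the same.
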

\begin{proof}
Let $A_i=\mathrm{Fix}(s_i)$ be the fixed point set of $s_i$. By assumption, the $A_i$ are non-empty and form a family of subtrees with pairwise non-empty intersection. By Helly's Theorem for Trees, the intersection $\bigcap\limits_{i=1}^{k}A_i$ is non-empty.\\
Any vertex $v$ in this intersection is fixed by each generator $s_i$ and therefore fixed by any element of $G$. Thus, $G$ has a global fixed point in $T$.
\end{proof}

\subsection{Automorphisms of free products of finite cyclic groups}
In this subsection, we develope the necessary theory of automorphisms of free products. For this, we present a special generating set of $\Aut{G}$ and show that the automorphism group can be written as an iterated semi-direct product.\\
Let $m \geq 2$ and $n_1, \ldots, n_m \geq 2$ be natural numbers. Let $G$ be a free product of $m$ finite cyclic groups $\mathbb{Z}/n_1\mathbb{Z}, \ldots , \mathbb{Z}/n_m\mathbb{Z}$ and fix a presentation $$G=\langle x_1, \ldots, x_m \mid x_i^{n_i}, i=1 \ldots, m \rangle.$$ Since the $x_i$ generate $G$, any automorphism of $G$ is described by its action on $\{x_1, \ldots ,x_m\}$. The group $G$ admits the following automorphisms:
\begin{itemize}
\item[$i)$] \emph{Factor Automorphisms}: A factor automorphism $\varepsilon$ maps each $x_i$ to a power $x_i^k$ for some $k$ with $\mathrm{gcd}(k,n_i)=1$. (The term \emph{factor automorphism} is due to the fact that such a map restricts to an automorphims of each free factor $\langle x_i \rangle$.)
\item[$ii)$] \emph{Permutations}: Any permutation $\sigma \in \Sym{m}$ satisfying $n_{\sigma(i)}=n_i$ for all $i=1, \ldots, m$ induces a permutation of the $x_i$ which extends to an automorphism $\sigma \in \Aut{G}$.
\item[$iii)$] \emph{Partial Conjugations}: Let $i,j \in \{1, \ldots, m\}, i \neq j$. Then, the partial conjugation $\alpha_i^j$ is given as follows:
$$\alpha_i^j(x_k)=\begin{cases} x_jx_kx_j^{-1} &\text{ if } k=i \\
x_k &\text{ if } k \neq i \end{cases}$$
\end{itemize}

The following is Proposition 1.2 in \cite{C88} and also a special case of the Main Theorem in \cite{CG09}. (For more references on graph products and their automorphism groups see also \cite{G90} and \cite{L93}.)

\begin{lemma}\label{lemma:gen}
The group $\Aut{G}$ is generated by factor automorphisms, permutations and partial conjugations.
\end{lemma}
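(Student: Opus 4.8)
The plan is to establish the lemma by an induction on the complexity of $G$, combining a normal form / peak reduction argument in the spirit of Whitehead's algorithm with the Kurosh subgroup theorem for free products. First I would set up the precise target: let $\Omega$ denote the subgroup of $\Aut{G}$ generated by all factor automorphisms, all admissible permutations, and all partial conjugations $\alpha_i^j$. We must show $\Omega = \Aut{G}$. The key invariant attached to an arbitrary $\varphi \in \Aut{G}$ is the total length $\|\varphi\| := \sum_{i=1}^m \ell(\varphi(x_i))$, where $\ell$ is the syllable length coming from the fixed free-product presentation. Since each $x_i$ has finite order, an automorphism permutes the conjugacy classes of the maximal finite subgroups (the conjugates of the $\langle x_i\rangle$), so after composing $\varphi$ with a suitable permutation and factor automorphism we may assume $\varphi(x_i)$ is conjugate to $x_i$ for each $i$; thus $\|\varphi\| \geq m$, with equality forcing $\varphi(x_i) = g_i x_i g_i^{-1}$ for some $g_i \in G$.

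The heart of the argument is a descent step: if $\|\varphi\| > m$, I would produce an element of $\Omega$ (a product of partial conjugations, possibly pre/post-composed with factor automorphisms and permutations) that strictly decreases $\|\varphi\|$. This is exactly the analogue of Whitehead's peak-reduction lemma: one looks at the (cyclically reduced forms of the) images $\varphi(x_i)$, finds a syllable or a common prefix that can be cancelled, and realizes the corresponding Whitehead automorphism as a composite of the generating automorphisms above — for free products of this shape, the "long-range" Whitehead automorphisms are precisely generated by partial conjugations (each moves a single generator by conjugation by another generator or by an element of another factor, and every element of $G$ is a product of the $x_j$). Running this descent until $\|\varphi\|=m$, we are reduced to showing that every automorphism of the form $x_i \mapsto g_i x_i g_i^{-1}$ lies in $\Omega$; this base case is handled by noting that such maps form the Fouxe–Rabinovitch / inner-type subgroup, and an easy induction on $\sum \ell(g_i)$ (peeling off the first syllable of some $g_i$ as a partial conjugation $\alpha_i^j$ or its inverse) shows each is a product of partial conjugations.

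As an alternative route I would keep in reserve a more structural argument: use the Kurosh subgroup theorem together with the fact that $G$ has only finitely many conjugacy classes of maximal finite subgroups to reduce to the (already classical) description of $\Out{G}$ for free products, or simply to invoke the cited Proposition 1.2 of \cite{C88} and the Main Theorem of \cite{CG09} directly, since the lemma is stated as a known result; the content here is to record the generating set, not to reprove graph-product automorphism theory from scratch. The main obstacle, in the self-contained approach, is making the peak-reduction step fully rigorous: one must check that no "exceptional" configurations arise where a single Whitehead move fails to reduce length and a pair of moves is needed, and one must verify carefully that every Whitehead automorphism relevant to a free product of finite cyclic groups is genuinely expressible via factor automorphisms, permutations, and partial conjugations — in particular handling the factor automorphisms' interaction with conjugations. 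Given that the statement is quoted verbatim from the literature, I would in the paper simply cite \cite{C88} and \cite{CG09} and move on, but the induction above is the proof I would write if a self-contained argument were required.
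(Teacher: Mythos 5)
Your fallback is exactly what the paper does: Lemma \ref{lemma:gen} is not proved in the paper at all, but quoted as Proposition 1.2 of \cite{C88} (and a special case of the Main Theorem of \cite{CG09}), so citing those sources is the intended ``proof.'' Your supplementary self-contained sketch correctly identifies the classical Whitehead/peak-reduction strategy underlying Collins' result, but be aware that as written it is only a sketch --- both the length-decreasing descent step and the ``easy induction'' for the base case (that every automorphism sending each $x_i$ to a conjugate of itself is a product of partial conjugations, i.e.\ the Fouxe--Rabinovitch theorem) conceal the genuine peak-reduction work and would need to be carried out in full if a citation were not acceptable.
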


\begin{remark} \label{remark:structure} \hfill
\begin{itemize}
\item[a)] The set of factor automorphisms forms a subgroup $F$ of $\Aut{G}$ which is isomorphic to the direct product $$F \cong \prod\limits_{i=1}^{m} \Aut{\langle x_i \rangle} \cong \prod\limits_{i=1}^{m}\Aut{\mathbb{Z}/n_i\mathbb{Z}} \cong \prod\limits_{i=1}^{m}\mathbb{Z}/n_i\mathbb{Z}^{\times}$$
where $\mathbb{Z}/n_i\mathbb{Z}^{\times}$ denotes the group of units in the ring $\mathbb{Z}/n_i\mathbb{Z}$.
\item[b)] 
Set $\underline{n}=(n_1, \ldots, n_m)$ and let $\Sym{\underline{n}} \subseteq \Sym{m}$ be the subgroup of those permutations $\sigma$ with $n_{\sigma(i)}=n_i$ for all $i=1, \ldots, m$.\\
Then, $\Sym{\underline{n}}$ acts on $F$ via change of coordinates. These groups generate in $\Aut{G}$ a semi-direct product $F \rtimes \Sym{\underline{n}}$ since $\Sym{\underline{n}}$ normalises $F$.
\item[c)] The subgroup generated by partial conjugations is called the \emph{Fouxe-Rabino-vitch subgroup} $\mathrm{FR}(G)$.\\
Since $\prod\limits_{i \neq j} \alpha_i^j$ equals the conjugation with $x_j$ the subgroup $\mathrm{FR}(G)$ contains the group of inner automorphisms. For $m=2$, any partial conjugation is an inner automorphism and $\FR{G}$ coincides with $\Inn{G}$.
\end{itemize}
\end{remark}

The following two lemmas can be deduced from Theorem 3.1 in \cite{GPR09}. (Also see Proposition 3.7 therein.)
As there are short direct proofs, we have included them here.
  
\begin{lemma}\label{lemma:FRnormal} The Fouxe-Rabinovitch subgroup $\mathrm{FR}(G)$ is a normal subgroup of $\Aut{G}$. 
\end{lemma}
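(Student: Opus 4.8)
The plan is to show that $\mathrm{FR}(G)$ is normal in $\Aut{G}$ by using the generating set from Lemma~\ref{lemma:gen}: it suffices to check that conjugating a partial conjugation $\alpha_i^j$ by each type of generator (a factor automorphism, a permutation, or another partial conjugation) lands back inside $\mathrm{FR}(G)$. Since $\mathrm{FR}(G)$ is by definition generated by all partial conjugations, and conjugation is an automorphism of $\Aut{G}$, it is enough to verify $g\alpha_i^j g^{-1}\in\mathrm{FR}(G)$ for $g$ ranging over factor automorphisms, permutations, and partial conjugations.

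The three cases are each a short direct computation. For a permutation $\sigma\in\Sym{\underline{n}}$, conjugating $\alpha_i^j$ by $\sigma$ simply relabels indices: one checks on generators $x_k$ that $\sigma\alpha_i^j\sigma^{-1}=\alpha_{\sigma(i)}^{\sigma(j)}$, which is again a partial conjugation. For a factor automorphism $\varepsilon$ sending $x_j\mapsto x_j^{k_j}$ and $x_i\mapsto x_i^{k_i}$, conjugating $\alpha_i^j$ by $\varepsilon$ yields the map fixing every $x_\ell$ with $\ell\neq i$ and sending $x_i$ to $\varepsilon\bigl(x_j\,\varepsilon^{-1}(x_i)\,x_j^{-1}\bigr)=x_j^{k_j}x_ix_j^{-k_j}$; since $x_j^{k_j}$ is a power of $x_j$, this equals $(\alpha_i^j)^{k_j}$ (taking exponents mod $n_j$), hence lies in $\mathrm{FR}(G)$. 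For conjugation by another partial conjugation $\alpha_p^q$, one again evaluates on generators: the resulting automorphism fixes $x_\ell$ for $\ell\neq i$ and sends $x_i$ to a conjugate of $x_i$ by a word in the $x_t$'s, and one rewrites this word as a product of partial conjugations. This last case splits into subcases according to how $\{p,q\}$ meets $\{i,j\}$, but in each subcase the conjugate is a short explicit product of $\alpha_\bullet^\bullet$'s (and when the conjugating word is itself an inner automorphism, one uses Remark~\ref{remark:structure}(c) that inner automorphisms lie in $\mathrm{FR}(G)$).

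The main obstacle — really the only place requiring care — is the bookkeeping in the partial-conjugation-by-partial-conjugation case: one must track which generator gets conjugated and by which element, and recognise the resulting ``conjugate $x_i$ by a word $w$'' map as a genuine element of $\mathrm{FR}(G)$ rather than merely an automorphism. The key fact that makes this manageable is that a map fixing all $x_\ell$ ($\ell\neq i$) and sending $x_i\mapsto w x_i w^{-1}$ for $w$ a word in the remaining generators decomposes into a product of partial conjugations $\alpha_i^{(\cdot)}$ by peeling off the syllables of $w$ one at a time; combined with the fact from Remark~\ref{remark:structure}(c) that $\prod_{i\neq j}\alpha_i^j$ is conjugation by $x_j$, this covers every conjugate that arises. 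Having verified the three cases on the generating set, normality of $\mathrm{FR}(G)$ in $\Aut{G}$ follows immediately.
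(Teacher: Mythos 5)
Your core argument is the paper's argument: the two computations that actually matter, $\varepsilon\alpha_i^j\varepsilon^{-1}=(\alpha_i^j)^{k_j}$ for a factor automorphism and $\sigma\alpha_i^j\sigma^{-1}=\alpha_{\sigma(i)}^{\sigma(j)}$ for a permutation, are exactly what the paper checks, and together with Lemma \ref{lemma:gen} they give normality. However, what you single out as ``the main obstacle'' --- conjugation of $\alpha_i^j$ by another partial conjugation $\alpha_p^q$ --- needs no argument at all: $\alpha_p^q\in\FR{G}$ and $\FR{G}$ is a subgroup, so $\alpha_p^q\,\alpha_i^j\,(\alpha_p^q)^{-1}$ is a product of three elements of $\FR{G}$ and lies in $\FR{G}$ trivially. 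Moreover, the description you give of that case is incorrect as stated: it is not true in general that $\alpha_p^q\,\alpha_i^j\,(\alpha_p^q)^{-1}$ fixes every $x_\ell$ with $\ell\neq i$. For instance, with $p,i,j$ pairwise distinct, $\alpha_p^i\,\alpha_i^j\,(\alpha_p^i)^{-1}$ sends $x_p$ to $w x_p w^{-1}$ with $w=x_jx_i^{-1}x_j^{-1}x_i\neq 1$, so it moves $x_p$ as well as $x_i$; your ``peel off syllables of $w$'' reduction therefore does not apply directly to this conjugate. Since that whole case is vacuous, this slip does not affect the validity of the lemma, but you should either delete the case or replace it by the one-line subgroup observation; the part of your proof that remains is then precisely the paper's proof.
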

\begin{proof} Let $\alpha_i^j$ be a partial conjugation and $\varepsilon \in F, \pi \in \Sym{\underline{n}}$.\\
We write $\varepsilon=(\varepsilon_1, \ldots, \varepsilon_m)$ where $\varepsilon(x_i)=x_i^{\varepsilon_i}$ for $i=1, \ldots, m$. Then, a short calculation shows
$$ \varepsilon \alpha_i^j \varepsilon^{-1}=(\alpha_i^j)^{\varepsilon_j} \text{ and }\pi \alpha_i^j \pi^{-1}=\alpha_{\pi(i)}^{\pi(j)}.$$
Hence, $\mathrm{FR}(G)$ is normalised by factor automorphisms and permutations. By Lemma \ref{lemma:gen}, $\mathrm{FR}(G)$ is a normal subgroup of $\Aut{G}$.
\end{proof}

\begin{lemma}\label{lemma:autstructure} The automorphism group $\Aut{G}$ is isomorphic to the semi-direct product $$\mathrm{FR}(G) \rtimes (F \rtimes \Sym{\underline{n}}).$$
\end{lemma}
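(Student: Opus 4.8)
The plan is to show that $\Aut{G}$ decomposes as $\FR{G} \rtimes (F \rtimes \Sym{\underline{n}})$ by checking the three defining properties of an (internal) semi-direct product: first, $\FR{G}$ is normal in $\Aut{G}$; second, the subgroup $H$ generated by $F$ and $\Sym{\underline{n}}$ is itself a semi-direct product $F \rtimes \Sym{\underline{n}}$; third, $\FR{G} \cap H = \{1\}$ and $\FR{G} \cdot H = \Aut{G}$. Normality of $\FR{G}$ is already Lemma \ref{lemma:FRnormal}, and the fact that $F$ and $\Sym{\underline{n}}$ generate a copy of $F \rtimes \Sym{\underline{n}}$ inside $\Aut{G}$ is recorded in Remark \ref{remark:structure}(b) (one checks $\Sym{\underline{n}}$ normalises $F$ via change of coordinates and $F \cap \Sym{\underline{n}} = \{1\}$, the latter because a non-trivial permutation moves some generator off its own cyclic factor while a factor automorphism does not). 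So the real content is the complementation statement.

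For the generation claim $\FR{G} \cdot H = \Aut{G}$, I would invoke Lemma \ref{lemma:gen}: every automorphism is a product of factor automorphisms, permutations, and partial conjugations. Using normality of $\FR{G}$, one can push all $\FR{G}$-factors (the partial conjugations) to the left past the factor automorphisms and permutations, rewriting an arbitrary word as $\varphi \cdot h$ with $\varphi \in \FR{G}$ and $h \in H$; the conjugation formulas $\varepsilon \alpha_i^j \varepsilon^{-1} = (\alpha_i^j)^{\varepsilon_j}$ and $\pi \alpha_i^j \pi^{-1} = \alpha_{\pi(i)}^{\pi(j)}$ from the proof of Lemma \ref{lemma:FRnormal} make this explicit. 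Hence $\Aut{G} = \FR{G} \cdot H$.

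The step I expect to be the main obstacle is the triviality of the intersection $\FR{G} \cap H = \{1\}$. The point is that an element of $H = F \rtimes \Sym{\underline{n}}$ sends each generator $x_i$ to $x_{\sigma(i)}^{k}$ for some permutation $\sigma \in \Sym{\underline{n}}$ and exponent $k$ coprime to $n_i$ — in particular it sends each $x_i$ to a power of a \emph{single} generator, and moreover the images are literally generators raised to powers, not conjugates of such. A non-trivial element of $\FR{G}$, by contrast, necessarily conjugates at least one generator non-trivially, so its image of some $x_i$ is a cyclically reduced word of syllable length $> 1$ (or at least not conjugate into $H$'s image in a way that preserves being a bare power); this can be made precise either by a normal-form / syllable-length argument in the free product, or by passing to the abelianization-type invariant that distinguishes partial conjugations from factor automorphisms, or by the Kurosh subgroup theorem applied to the images of the free factors. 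A clean way to phrase it: if $\psi \in \FR{G} \cap H$, then since $\psi \in H$ it maps each free factor $\langle x_i \rangle$ to a free factor $\langle x_{\sigma(i)} \rangle$ of the fixed free-product decomposition, whereas an element of $\FR{G}$ maps $\langle x_i \rangle$ to a conjugate $g_i \langle x_i \rangle g_i^{-1}$; comparing these and using uniqueness of the free-product decomposition forces all the conjugators $g_i$ to be trivial, i.e. $\psi = 1$. I would write out this normal-form argument carefully, as it is the one genuinely non-formal ingredient; everything else is bookkeeping with the generators supplied by Lemma \ref{lemma:gen} and the relations from Lemma \ref{lemma:FRnormal}.
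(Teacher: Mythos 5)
Your overall architecture coincides with the paper's: normality of $\FR{G}$ (Lemma \ref{lemma:FRnormal}), the subgroup $F \rtimes \Sym{\underline{n}}$ from Remark \ref{remark:structure}, generation from Lemma \ref{lemma:gen}, so that the entire content is the triviality of $\FR{G} \cap (F \rtimes \Sym{\underline{n}})$. It is in that key step that your sketch has a genuine gap. Comparing the two descriptions of $\psi(\langle x_i \rangle)$ --- a free factor $\langle x_{\sigma(i)} \rangle$ on one side, a conjugate $g_i \langle x_i \rangle g_i^{-1}$ on the other --- does not ``force all the conjugators $g_i$ to be trivial''. Even after you justify that distinct free factors are non-conjugate and that free factors are self-normalising (neither of which is literally ``uniqueness of the free-product decomposition''; you would need a Kurosh- or malnormality-type argument, or the paper's retraction trick), the comparison only yields $\sigma(i)=i$ and $g_i \in \langle x_i \rangle$, i.e.\ that $\psi$ stabilises each free factor setwise. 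On the level of subgroups this shows $\FR{G} \cap (F \rtimes \Sym{\underline{n}}) \subseteq F$, but it does not exclude a non-trivial factor automorphism: you still have to rule out $\psi(x_i) = x_i^k$ with $k \not\equiv 1 \pmod{n_i}$, and nothing in your comparison of subgroup images addresses the exponent.

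The missing ingredient is the element-level fact that every $\psi \in \FR{G}$ sends the element $x_i$ (not merely the subgroup $\langle x_i \rangle$) to a conjugate $g_i x_i g_i^{-1}$ of $x_i$; once $g_i \in \langle x_i \rangle$ is known, commutativity of $\langle x_i \rangle$ gives $\psi(x_i) = x_i$ and hence $\psi = \mathrm{id}$. The paper gets both needed inputs in one stroke with the retractions $\pi_i \colon G \to \langle x_i \rangle$: they show $x_i$ is not conjugate into any $\langle x_j \rangle$, $j \neq i$, and then, from $x_i^k$ conjugate to $x_i$ in $G$, that $x_i^k$ and $x_i$ are conjugate in the abelian group $\langle x_i \rangle$, so $k$-th power is trivial to dispose of --- no appeal to Kurosh or self-normalisation is required. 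Relatedly, your assertion that a non-trivial element of $\FR{G}$ sends some $x_i$ to a ``cyclically reduced word of syllable length $>1$'' is false as stated: such images are conjugates $w x_i w^{-1}$, which cyclically reduce to $x_i$; the usable statement is that such an image is a bare power of a generator only if the conjugator can be taken inside $\langle x_i \rangle$, and that is exactly the point your sketch needs to prove rather than assume. The generation step $\Aut{G} = \FR{G} \cdot (F \rtimes \Sym{\underline{n}})$ via Lemma \ref{lemma:gen} and the conjugation formulas is fine and matches the paper.
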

\begin{proof} By Remark \ref{remark:structure} b) the factor automorphisms and permutations generate a semi-direct product $F \rtimes \Sym{\underline{n}}$. By Lemma \ref{lemma:gen}, $\Aut{G}$ is generated by $\FR{G}$ and $F \rtimes \Sym{\underline{n}}$. Moreover, $\FR{G}$ is normal in $\Aut{G}$ by the previous Lemma \ref{lemma:FRnormal}. Hence, it only remains to prove that $\FR{G}$ intersects $F \rtimes \Sym{\underline{n}}$ trivially.\\
Let $\varphi \in  \FR{G} \cap (F \rtimes \Sym{\underline{n}})$ and let $i \in \{1, \ldots, m\}$ be arbitrary. Since $\varphi$ belongs to $F \rtimes \Sym{\underline{n}}$ it maps the free factor $\langle x_i \rangle$ to some free factor $\langle x_j \rangle$, i.e. $\varphi(x_i)=x_j^k$ for some suitable $k$. But $\varphi \in \FR{G}$ also implies that $\varphi(x_i)$ is conjugate to $x_i$.\\
\textbf{Claim}: $x_i$ is not conjugate to any element $y \in \langle x_j \rangle$ for $j \neq i$.\\
Let $\pi_i:G \rightarrow \langle x_i \rangle$ be the homomorphism defined by $x_i \mapsto x_i$ and $x_j \mapsto 1$ for $j \neq i$. Then, the restriction of $\pi_i$ to $\langle x_i \rangle$ is injective while all free factors $\langle x_j \rangle$ with $j \neq i$ are contained in the kernel of $\pi_i$. As $\ker(\pi_i)$ is normal in $G$ this shows the claim.\\
By the claim, $x_j^k$ only can be conjugate to $x_i$ if $i=j$ and hence $\varphi(x_i)=x_i^k$. Since $x_i$ and $x_i^k$ are conjugate in $G$, also $\pi_i(x_i)=x_i$ and $\pi_i(x_i^k)=x_i^k$ must be conjugate in $\langle x_i \rangle$. Since $\langle x_i \rangle$ is abelian, this yields $x_i^k=x_i$ and hence $\varphi(x_i)=x_i$.\\
As $i$ was arbitrary, we have $\varphi(x_j)=x_j$ for all $j$ and therefore $\varphi=\mathrm{id}$.
\end{proof}

\begin{remark}
The decomposition of $\Aut{G}$ given by the lemma corresponds to the semi-direct product structure described by Theorem 3.1 in \cite{GPR09} where $\FR{G}$ coincides with $\mathrm{Aut}^0 G$ and $F \rtimes \Sym{\underline{n}}$ with $\mathrm{Aut}^1 G$.
\end{remark}

\subsection{Characteristic Subgroups}
This subsection prepares the reduction techniques required in Section 6. Basic facts on characteristic subgroups can be found in many textbooks on group theory, e.g. \cite{S82}.
  
\begin{definition}
Let $G$ be a group. A subgroup $H \subseteq G$ is called \emph{characteristic} if $\varphi(H)=H$ for every automorphism $\varphi \in \Aut{G}$.
\end{definition}

\begin{example}
Let $G$ be a group. The following subgroups are characteristic:
\begin{itemize}
\item[a)] the trivial group $\{1\}$ and the whole group $G$
\item[b)] the center $Z(G)$
\item[c)] the commutator subgroup $DG$
\item[d)] the subgroup generated by all elements of order $k$ for some $k \in \mathbb{N} \cup \{\infty\}$
\item[e)] the subgroup generated by all subgroups isomorphic to a given group $\Gamma$ (In the case of a cyclic group $\Gamma$ this coincides with the previous example.)
\end{itemize} 
\end{example}

\begin{remark}\label{remchar}
Let $G$ be a group.
\begin{itemize} 
\item[$i)$] Since for any $g \in G$ the conjugation map $x \mapsto gxg^{-1}$ defines an automorphism of $G$, each characteristic subgroup $H$ of $G$ is normal. 
\item[$ii)$] If $H,H' \subseteq G$ are two characteristic subgroups of $G$, then also their intersection $H \cap H'$ and the subgroup $\langle H \cup H' \rangle$ are characteristic.
\item[$iii)$] Let $H \subseteq G$ be a subgroup with $\varphi(H) \subseteq H$ for all $\varphi \in \Aut{G}$. Then, $H$ is a characteristic subgroup. For a given automorphism $\varphi \in \Aut{G}$ we can consider its inverse $\varphi^{-1} \in \Aut{G}$. By assumption, we have $\varphi^{-1}(H) \subseteq H$ which yields $H \subseteq \varphi(H)$ and hence $\varphi(H)=H$.
\end{itemize}
\end{remark}

\begin{lemma}\label{lemma:inducedmap} Let $G$ be a group and $N \subseteq G$ a characteristic subgroup of $G$. Then, there is a well-defined group homomorphism $\Aut{G} \rightarrow \Aut{G/N}, \theta \mapsto \bar{\theta}$ where $\bar{\theta}(gN)=\theta(g)N$.
\end{lemma}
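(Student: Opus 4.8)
The plan is a routine verification in a few steps, the only substantive input being that a characteristic subgroup satisfies $\theta(N)=N$ for every $\theta\in\Aut{G}$ (not merely $\theta(N)\subseteq N$, which would already follow from normality under inner automorphisms but not in general, as recorded in Remark \ref{remchar}). Fix $\theta\in\Aut{G}$. First I would show that the assignment $gN\mapsto\theta(g)N$ is well defined on cosets: if $gN=g'N$ then $g^{-1}g'\in N$, so $\theta(g)^{-1}\theta(g')=\theta(g^{-1}g')\in\theta(N)=N$, hence $\theta(g)N=\theta(g')N$. This produces a map $\bar\theta\colon G/N\to G/N$, and it is a group homomorphism because $\bar\theta(gN\cdot g'N)=\bar\theta(gg'N)=\theta(gg')N=\theta(g)\theta(g')N=\bar\theta(gN)\,\bar\theta(g'N)$, using that $\theta$ is a homomorphism.

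Next I would verify that the assignment $\theta\mapsto\bar\theta$ is compatible with composition and with identities: $\overline{\mathrm{id}_G}=\mathrm{id}_{G/N}$ is immediate from the definition, and for $\theta_1,\theta_2\in\Aut{G}$ and $g\in G$ one computes
$$\overline{\theta_1\circ\theta_2}(gN)=(\theta_1\circ\theta_2)(g)N=\theta_1\bigl(\theta_2(g)\bigr)N=\bar\theta_1\bigl(\theta_2(g)N\bigr)=\bar\theta_1\bigl(\bar\theta_2(gN)\bigr),$$
so $\overline{\theta_1\circ\theta_2}=\bar\theta_1\circ\bar\theta_2$. Applying this to $\theta$ and its inverse $\theta^{-1}\in\Aut{G}$ gives $\overline{\theta^{-1}}\circ\bar\theta=\overline{\mathrm{id}_G}=\mathrm{id}_{G/N}$ and likewise $\bar\theta\circ\overline{\theta^{-1}}=\mathrm{id}_{G/N}$; hence $\bar\theta$ is bijective and therefore lies in $\Aut{G/N}$. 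Together these facts say precisely that $\theta\mapsto\bar\theta$ is a group homomorphism $\Aut{G}\to\Aut{G/N}$.

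There is essentially no obstacle here; the only points to be careful about are invoking $\theta(N)=N$ (needed both for $\bar\theta$ to be well defined and for it to be surjective onto $G/N$) and observing that the bijectivity of each $\bar\theta$ need not be checked directly once compatibility with composition is established, since then $\overline{\theta^{-1}}$ is automatically a two-sided inverse of $\bar\theta$.
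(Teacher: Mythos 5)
Your proposal is correct and follows essentially the same route as the paper: well-definedness via $\theta(g_1)^{-1}\theta(g_2)=\theta(g_1^{-1}g_2)\in\theta(N)=N$, with $\overline{\theta^{-1}}$ serving as the inverse of $\bar\theta$. You simply spell out the compatibility with composition and the homomorphism property of $\bar\theta$, which the paper declares immediate from the definition.
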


\begin{proof} If $\theta$ is well-defined it is immediate from the definition that it is an automorphism of $G/N$ with inverse $\overline{\theta^{-1}}$. So let $g_1, g_2 \in G$ such that $g_1N=g_2N$. Then, we have $g_1^{-1}g_2 \in N$ and thus $\theta(g_1)^{-1}\theta(g_2)=\theta(g_1^{-1}g_2) \in \theta(N)=N$.\\
This implies $\theta(g_1)N=\theta(g_2)N$ and $\bar{\theta}$ is well-defined.
\end{proof}


\subsection{Extending actions to semi-direct products}

In this subsection, we provide a lemma which we use in the proof of Theorem 1.1 in the cases $m=2$ (Section 3) and $m=3$ (Section 4). 

\begin{lemma}\label{lemma:extend}
Let $G=N \rtimes H$ be a semi-direct product and let $\mathcal{U}$ be a set of subgroups of $N$ which is $H$-invariant, i.e. for all $U \in \mathcal{U}$ and all $h \in H$ we have $hUh^{-1} \in \mathcal{U}$. Then, the left regular action of $N$ on $\bigcup \limits_{U \in \mathcal{U}}N/U$ extends to $G$.
\end{lemma}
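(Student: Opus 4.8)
The plan is to construct an explicit action of $G = N \rtimes H$ on the set $X = \bigsqcup_{U \in \mathcal{U}} N/U$ which restricts to the left regular action on each coset space $N/U$ when we view $N \leq G$. Write a typical element of $X$ as a pair $(U, nU)$ with $U \in \mathcal{U}$ and $n \in N$, remembering that $(U,nU)=(U,n'U)$ precisely when $n^{-1}n' \in U$. Since every element of $G$ can be written uniquely as $nh$ with $n \in N$, $h \in H$, it suffices to define the action of $N$ and of $H$ separately and check that the two prescriptions are compatible with the semidirect product multiplication. For $n_0 \in N$ I would set $n_0 \cdot (U, nU) = (U, n_0 n U)$, i.e.\ the disjoint union of the left regular actions. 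For $h \in H$ I would set $h \cdot (U, nU) = (hUh^{-1}, (hnh^{-1})(hUh^{-1}))$; this uses precisely the $H$-invariance hypothesis, since we need $hUh^{-1} \in \mathcal{U}$ for the target to lie in $X$.

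The key steps, in order, are: first, check that the $H$-formula is well-defined on cosets — if $n^{-1}n' \in U$ then $(hnh^{-1})^{-1}(hn'h^{-1}) = h(n^{-1}n')h^{-1} \in hUh^{-1}$, so the value is independent of the coset representative. Second, verify that $h \mapsto (h\cdot -)$ is a group action of $H$: the identity acts trivially, and $(h_1h_2)\cdot(U,nU) = h_1\cdot(h_2\cdot(U,nU))$ follows from $h_1(h_2 U h_2^{-1})h_1^{-1} = (h_1h_2)U(h_1h_2)^{-1}$ and the analogous identity for the conjugate of $n$. Third, verify the compatibility relation that makes these two actions glue into an action of $N \rtimes H$: concretely one checks $h \cdot (n_0 \cdot \xi) = (h n_0 h^{-1}) \cdot (h \cdot \xi)$ for all $\xi \in X$, which is exactly the relation $h n_0 = (h n_0 h^{-1}) h$ in $G$ translated through the action; this is a direct computation with the two formulas. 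Granting these, one defines $(n_0 h) \cdot \xi := n_0 \cdot (h \cdot \xi)$ and the compatibility relation shows this respects products in $G$, so $G$ acts on $X$. Finally, observe that restricting to $h = 1$ recovers the left regular action of $N$ on $\bigsqcup_U N/U$, which is the assertion.

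I do not expect a serious obstacle here; the only point requiring care is bookkeeping the index $U$ as it moves under the $H$-action — it is tempting to think of $X$ as a single quotient and lose track of which summand a point lives in, so I would be careful to carry the pair notation $(U,nU)$ throughout and to use $H$-invariance of $\mathcal{U}$ at exactly the two places where the conjugate subgroup $hUh^{-1}$ appears as a target index. Everything else is a routine verification of the semidirect-product action axioms, which is why I would present it compactly rather than spelling out every line.
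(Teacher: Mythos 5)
Your proposal is correct and follows essentially the same route as the paper: define the $H$-action by $h\cdot nU = (hnh^{-1})(hUh^{-1})$, use the $H$-invariance of $\mathcal{U}$ for well-definedness, and verify the single compatibility relation coming from $hn h^{-1}=h(n)$ to glue the $N$- and $H$-actions into an action of $N\rtimes H$ (the paper phrases this via the free product $N\ast H$, you via the normal form $nh$, which is only a cosmetic difference). No gaps.
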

\begin{proof}
Write $h(x)=hxh^{-1}$ for $h \in H,x \in N$. Set $h(nU):=h(n)h(U)$ for $h \in H, n \in N$ and $U \in \mathcal{U}$. This is obviously a group action if it is well-defined. By assumption, $U \in \mathcal{U}$ implies $h(U) \in \mathcal{U}$. If $nU=n'U$, we have $n^{-1}n' \in U$ and thus $h(n)^{-1}h(n')=h(n^{-1}n') \in h(U)$ and therefore $h(n)h(U)=h(n')h(U)$ which shows that the action of $H$ is well-defined.\\
The separate actions of $N$ and $H$ induce an action of the free product $N \ast H$ on $\bigcup \limits_{U \in \mathcal{U}}N/U$. In order to obtain an action of $G$, we need to check that this action is compatible with the semi-direct product structure on $G$. This means that we have to show for all $n \in N$ and $h \in H$ that the element $h(n)^{-1}hnh^{-1}\in N \ast H$ acts trivially. Let $xU \in N/U$ for some $U \in \mathcal{U}$. Then we have:
\begin{flalign*}
\begin{split}
h(n)^{-1}hnh^{-1}(xU)&=h(n)^{-1}hn(h^{-1}(x)h^{-1}(U))\\
&=h(n)^{-1}h(nh^{-1}(x)h^{-1}(U))\\
&=h(n)^{-1}(h(n)hh^{-1}(x)hh^{-1}(U))\\
&=h(n)^{-1}(h(n)xU)=xU
\end{split}
\end{flalign*}
Since $xU \in N/U$ was arbitrary, the element $h(n)^{-1}hnh^{-1}$ acts trivially on $\bigcup \limits_{U \in \mathcal{U}}N/U$ and we obtain the required action of $G$.
\end{proof}

\section{$\Aut{\mathbb{Z}/n\mathbb{Z} \ast \mathbb{Z}/n \mathbb{Z}}$ does not have FA}
The goal of this section is to prove Theorem 1.1 for $m=2$. Let $n \geq 2$ and $G=\mathbb{Z}/n\mathbb{Z} \ast \mathbb{Z}/n \mathbb{Z}$. We fix a presentation $G=\langle a,b \mid a^n, b^n \rangle$ and set ${A = \langle a \rangle,}$ $B= \langle b \rangle$. Applying Lemma \ref{lemma:autstructure} we obtain:

\begin{lemma}\label{lemma:struct2}
The group $\Aut{G}$ is isomorphic to $G \rtimes (F \rtimes \Sym{2})$ where $F \cong (\mathbb{Z}/n\mathbb{Z}^{\times})^2$ is the subgroup of factor automorphisms of $G$.
\end{lemma}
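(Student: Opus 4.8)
The plan is to specialize Lemma~\ref{lemma:autstructure} to the case $m=2$, $n_1 = n_2 = n$. First I would invoke that lemma to write $\Aut{G} \cong \FR{G} \rtimes (F \rtimes \Sym{\underline{n}})$, and then identify each of the three factors explicitly. Since $n_1 = n_2 = n$, every permutation in $\Sym{2}$ satisfies the order-preserving condition, so $\Sym{\underline{n}} = \Sym{2}$. By Remark~\ref{remark:structure}~a), the factor automorphism subgroup is $F \cong \Aut{\mathbb{Z}/n\mathbb{Z}} \times \Aut{\mathbb{Z}/n\mathbb{Z}} \cong (\mathbb{Z}/n\mathbb{Z}^{\times})^2$, which is the content of the final clause of the statement.

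The remaining point is that $\FR{G} \cong G$ when $m=2$. By Remark~\ref{remark:structure}~c), for $m=2$ every partial conjugation is inner: indeed $\alpha_1^2$ is conjugation by $x_2 = b$ (it fixes $b$ and sends $a \mapsto bab^{-1}$), and likewise $\alpha_2^1$ is conjugation by $x_1 = a$. Hence $\FR{G} = \Inn{G}$. Since $G = A \ast B$ with $A, B$ finite and nontrivial, $G$ has trivial center, so $\Inn{G} \cong G/Z(G) \cong G$. Combining these identifications gives $\Aut{G} \cong G \rtimes (F \rtimes \Sym{2})$ with $F \cong (\mathbb{Z}/n\mathbb{Z}^{\times})^2$, as claimed.

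There is no serious obstacle here — the lemma is essentially an unpacking of Lemma~\ref{lemma:autstructure} together with Remark~\ref{remark:structure}. The only mild care needed is the assertion $\Inn{G} \cong G$, which I would justify by noting that a nontrivial free product always has trivial center (any nontrivial element of a free product $A \ast B$ with $A,B \neq 1$ that lies in the center would have to lie in a conjugate of each factor, forcing a contradiction on normal form), so the quotient $G/Z(G)$ defining $\Inn{G}$ is just $G$ itself. I would state this briefly rather than prove it in full, since it is standard.
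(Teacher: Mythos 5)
Your proposal is correct and follows essentially the same route as the paper: both specialize Lemma~\ref{lemma:autstructure}, use Remark~\ref{remark:structure}~c) to identify $\FR{G}=\Inn{G}$ for $m=2$, and conclude $\Inn{G}\cong G$ from the triviality of the center, with $\Sym{\underline{n}}=\Sym{2}$ since the two factor orders agree. The extra details you supply (explicitly exhibiting the partial conjugations as inner, and sketching why a nontrivial free product has trivial center) are fine but not needed beyond what the paper already records.
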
 
\begin{proof}
By Remark \ref{remark:structure} c) the subgroup $\mathrm{FR}(G)$ equals $\mathrm{Inn}(G)$. Since $G$ has trivial center, we have $\mathrm{FR}(G) = \mathrm{Inn}(G) \cong G$.\\
Since both free factors have order $n$, the permutation subgroup $\Sym{\underline{n}}$ is the whole symmetric group $\Sym{2}$.
\end{proof}


\begin{proposition}
The group $\Aut{G}$ does not have property FA.
\end{proposition}
\begin{proof}In the following, we construct an action of $\mathrm{Aut}(G)$ on a tree without a global fixed point.\\
Recall that $G$ acts without a fixed point on the Bass-Serre tree $T$ which is given as follows (cf. \S 4, Theorem 7 in \cite{S77}): The vertices of $T$ are $V(T)=G/A \sqcup G/B$ and a pair of cosets $\{g_1A,g_2B\}$ forms an edge iff $g_1A \cap g_2B \neq \emptyset$, i.e. iff there exists $g \in G$ such that $g_1A=gA$ and $g_2B=gB$. (Therefore, the edges are in bijection with $G=G/\{1\}$.) The following picture shows a part of the tree $T$ in the case $n=3$:
\begin{figure}[H]
\begin{center}
\begin{tikzpicture}[scale=0.7]
\node (a1) at (-1,0) [label=left:$A$]{};
\node (a2) at (1,0) [label=right:$B$]{};
\node (a3) at (-2,1) [label=right:$aB$]{};
\node (a4) at (-2,-1) [label=right:$a^2B$]{};
\node (a5) at (-3.3,1.5) [label=below:$ab^2A$]{};
\node (a6) at (-2.7,2.2) [label=$abA$]{};
\node (a7) at (-3.3,-1.5) [label=$a^2bA$]{};
\node (a8) at (-2.7,-2.2) [label=below:$a^2b^2A$]{};
\node (a9) at (2,1) [label=left:$bA$]{};
\node (a10) at (3.3,1.5) [label=below:$ba^2B$]{};
\node (a11) at (2.7,2.2) [label=$baB$]{};
\node (a12) at (3.3,-1.5) [label=$b^2aB$]{};
\node (a13) at (2.7,-2.2) [label=below:$b^2a^2B$]{};
\node (a14) at (2,-1) [label=left:$b^2A$]{};
\draw[fill=black]  (-1,0) circle (1pt);
\draw[fill=black]  (1,0) circle (1pt);
\draw[fill=black]  (-2,1) circle (1pt);
\draw[fill=black]  (-2,-1) circle (1pt);
\draw[fill=black]  (-3.3,1.5) circle (1pt);
\draw[fill=black]  (-2.7,2.2) circle (1pt);
\draw[fill=black]  (-3.3,-1.5) circle (1pt);
\draw[fill=black]  (-2.7,-2.2) circle (1pt);
\draw[fill=black]  (2,1) circle (1pt);
\draw[fill=black]  (3.3,1.5) circle (1pt);
\draw[fill=black]  (2.7,2.2) circle (1pt);
\draw[fill=black]  (3.3,-1.5) circle (1pt);
\draw[fill=black]  (2.7,-2.2) circle (1pt);
\draw[fill=black]  (2,-1) circle (1pt);
\draw[fill=black]  (-1,0) --(1, 0);
\draw[fill=black]  (-1,0)--(-2, 1);
\draw[fill=black]  (-1,0)--(-2, -1);
\draw[fill=black]  (2,1)--(3.3,1.5);
\draw[fill=black]  (2,1)--(2.7,2.2);
\draw[fill=black]  (2,-1)--(3.3,-1.5);
\draw[fill=black]  (2,-1)--(2.7,-2.2);
\draw[fill=black]  (-2,1)--(-3.3,1.5);
\draw[fill=black]  (-2,1)--(-2.7,2.2);
\draw[fill=black]  (-2,-1)--(-3.3,-1.5);
\draw[fill=black]  (-2,-1)--(-2.7,-2.2);
\draw[fill=black]  (1,0)--(2, 1);
\draw[fill=black]  (1,0)--(2, -1);
\draw[dotted]  (2.7,2.2)--(2.7, 3);
\draw[dotted]  (2.7,2.2)--(3.3, 2.8);
\draw[dotted]  (-2.7,2.2)--(-2.7, 3);
\draw[dotted]  (-2.7,2.2)--(-3.3, 2.8);
\draw[dotted]  (2.7,-2.2)--(2.7, -3);
\draw[dotted]  (2.7,-2.2)--(3.3, -2.8);
\draw[dotted]  (-2.7,-2.2)--(-2.7, -3);
\draw[dotted]  (-2.7,-2.2)--(-3.3, -2.8);
\draw[dotted]  (3.3,1.5)--(4, 1.8);
\draw[dotted]  (3.3,1.5)--(4, 1.2);
\draw[dotted]  (3.3,-1.5)--(4, -1.8);
\draw[dotted]  (3.3,-1.5)--(4, -1.2);
\draw[dotted]  (-3.3,1.5)--(-4, 1.8);
\draw[dotted]  (-3.3,1.5)--(-4, 1.2);
\draw[dotted]  (-3.3,-1.5)--(-4, -1.8);
\draw[dotted]  (-3.3,-1.5)--(-4, -1.2);
\end{tikzpicture}
\end{center}
\end{figure}
The action of $G$ on $T$ is given by 
left multiplication. Since $\mathrm{Inn}(G)$ is isomorphic to $G$, we can view the action of $G$ as an action of $\mathrm{Inn}(G) \subseteq \mathrm{Aut}(G)$.\\
Our goal is now to extend this action to the whole automorphism group $\mathrm{Aut}(G)$.
The factor automorphisms in $\mathrm{Aut}(A) \times \mathrm{Aut}(B)$ map $A$ to $A$ and $B$ to $B$. The generator $\sigma \in \mathrm{Sym}(2)$ interchanges $A$ and $B$. Thus, $F \rtimes \Sym{2}$ preserves the set of subgroups $\{A,B\}$ and we can apply Lemma \ref{lemma:extend} and obtain an action of $\Aut{G}=G \rtimes (F \rtimes \Sym{2})$ on $V(T)=G/A \sqcup G/B$.\\ 
It remains to check that edges are mapped to edges. Let $e \in E(T)$ be an edge of $T$. Then, $e$ has end-points $g_1A,g_2B$ with $g_1A \cap g_2B\neq \emptyset$. As mentioned above, we find $g \in G$ such that $gA=g_1A$ and $gB=g_2B$. This yields:
\begin{flalign*}
\begin{split}
\varphi(g_1A)&=\varphi(gA)=\varphi(g)\varphi(A)\\
\varphi(g_2B)&=\varphi(gB)=\varphi(g)\varphi(B)
\end{split}
\end{flalign*}
That shows $\varphi(g) \in \varphi(g_1A) \cap \varphi(g_2B)$, i.e $\varphi(g_1A) \cap \varphi(g_2B) \neq \emptyset$ and $\varphi(g_1A), \varphi(g_2B)$ are joined by an edge. Thus, $\Aut{G}$ acts by graph automorphisms on the tree $T$. Since $G$ acts without global fixed point on $T$, also the extended action has no global fixed point.\\
By passing to the barycentric subdivision $\mathrm{sd}(T)$, we get rid of the edge inversions (which were induced by $\sigma\in \mathrm{Sym}(2)$). The action of $\mathrm{Aut}(G)$ on $\mathrm{sd}(T)$ shows that $\mathrm{Aut}(G)$ does not have property FA.
\end{proof}

\begin{remark}\label{remark:amalgam2}
The proof yields also a possibility to present $\Aut{G}$ as an amalgamated product.\\
Since $G$ acts transitively on the edges of $T$ and the generator $\sigma \in \mathrm{Sym}(2)$ maps the edge $(A \subseteq \{A,B\}) \in E(\mathrm{sd}(T))$ to the edge $(B \subseteq \{A,B\})$, $\mathrm{Aut}(G)$ acts transitively on the edges of $\mathrm{sd}(T)$. Moreover, any automorphism which stabilises an edge $e \in E(\mathrm{sd}(T))$ has to fix both its endpoints, i.e. has to fix $e$ pointwise. Hence, the edge $(A \subseteq \{A,B\})$ forms a fundamental domain for the action of $\mathrm{Aut}(G)$ on $\mathrm{sd}(T)$.\\
By Bass-Serre theory (cf. \S 4, Theorem 6 in \cite{S77}), $\mathrm{Aut}(G)$ is isomorphic to the free product of the vertex stabilisers $\mathrm{Aut}(G)_A$ and $\mathrm{Aut}(G)_{\{A,B\}}$ amalgamated along the stabiliser of the whole edge $\mathrm{Aut}(G)_{(A \subseteq \{A,B\})}$. With Lemma \ref{lemma:struct2} one can easily determine these stabilisers. One has $\Aut{G}_A=A \rtimes F$, ${\Aut{G}_{\{A,B\}}=F \rtimes \Sym{2}}$ and hence
$$\Aut{G} \cong (A \rtimes F) \ast_F (F \rtimes \mathrm{Sym}(2)).$$
\end{remark}

\begin{remark}
If $n_1,n_2 \geq 2$ with $n_1 \neq n_2$ and $G=\mathbb{Z}/n_1 \mathbb{Z} \times \mathbb{Z}/n_2 \mathbb{Z}$, we can repeat the above proof of Proposition 3.2. The only difference is that $\Sym{n_1,n_2}=\{\mathrm{id}\}$ here is trivial. (So we do not need to pass to the barycentric subdivision.) 
\end{remark}

\section{$\Aut{\mathbb{Z}/n\mathbb{Z} \ast \mathbb{Z}/n\mathbb{Z} \ast \mathbb{Z}/n \mathbb{Z}}$ does not have FA}

This section is devoted to prove Theorem 1.1 for $m=3$. The main strategy is to understand the outer automorphism group $\Out{G}=\Aut{G}/\mathrm{Inn}(G)$ and construct an action of $\Out{G}$ on a tree without a global fixed point.

Assume that $G$ is a free product of three copies of $\mathbb{Z}/n\mathbb{Z}$ for some $n \geq 2$. We fix a presentation $G=\langle x_1, x_2, x_3 \mid x_i^n, i \in  \{1,2,3\} \rangle$. First, we investigate the structure of the Fouxe-Rabinovitch subgroup $\mathrm{FR}(G)$.

\begin{lemma}\label{lemma:struct3}
The Fouxe-Rabinovitch subgroup $\mathrm{FR}(G)$ is isomorphic to $G \rtimes G$.
\end{lemma}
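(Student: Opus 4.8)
The plan is to exhibit an explicit internal semi-direct product decomposition of $\FR{G}$ for $m=3$. The partial conjugations available are $\alpha_i^j$ for $i\neq j$ in $\{1,2,3\}$, and by Remark \ref{remark:structure} c) the product $\prod_{i\neq j}\alpha_i^j$ is conjugation by $x_j$, so $\Inn{G}\subseteq\FR{G}$. Since $G$ has trivial center, $\Inn{G}\cong G$. The first step is to guess the right normal complement: I would set $N=\Inn{G}$ (a normal subgroup of $\Aut{G}$, hence of $\FR{G}$) and look for a subgroup $H\cong G$ that together with $N$ generates $\FR{G}$ and meets $N$ trivially. A natural candidate for $H$ is the subgroup generated by the "partial conjugations based at $x_3$" that fix $x_3$, namely $H=\langle \alpha_1^3,\alpha_2^3\rangle$ — or some analogous choice singling out one index. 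The idea is that $\alpha_1^3$ and $\alpha_2^3$ each have order $n$ (since $x_3$ has order $n$, conjugation by $x_3$ on a single generator has order $n$) and, because they move disjoint generators, they generate a free product $\langle\alpha_1^3\rangle\ast\langle\alpha_2^3\rangle\cong \mathbb{Z}/n\mathbb{Z}\ast\mathbb{Z}/n\mathbb{Z}$... which is not $G$. So more care is needed; the cleaner route is to use the known presentation of $\FR{G}$ (Fouxe-Rabinovitch / McCool relations, available from \cite{GPR09}) to read off the structure directly.

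Concretely, I would proceed as follows. First, recall from \cite{GPR09} that $\FR{G}$ (which is $\mathrm{Aut}^0 G$ there) for a free product of finitely many finite groups has an explicit finite presentation on the partial conjugations, with relations of McCool type: each $\alpha_i^j$ has the order of the corresponding factor generator; $\alpha_i^j$ and $\alpha_i^k$ generate a copy of (the relevant factor)$\times$(the relevant factor) when... — more precisely the relations $[\alpha_i^j,\alpha_k^l]=1$ when $\{i,j\}\cap\{k,l\}$ is suitably disjoint, and the braid-type relation $[\alpha_i^j,\alpha_k^j\alpha_i^j]=1$ or $[\alpha_i^j,\alpha_k^j]=[\alpha_i^j,\alpha_k^i]$ etc. The second step is to split the six generators $\alpha_1^2,\alpha_1^3,\alpha_2^1,\alpha_2^3,\alpha_3^1,\alpha_3^2$ into two families: those "pointing at" a fixed index and the inner automorphisms. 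The third step is to check the two defining conditions of an internal semi-direct product $G\rtimes G$: (a) one copy of $G$ is $\Inn{G}$, normal in $\FR{G}$ since it is normal in all of $\Aut{G}$; (b) the complement is a subgroup isomorphic to $G$; (c) they generate $\FR{G}$ and intersect trivially. For the trivial-intersection step I would reuse the retraction trick from the proof of Lemma \ref{lemma:autstructure}: an element of $\Inn{G}$ is conjugation by some $w\in G$, and asking it to also lie in the complement forces $w$ into a specific (trivial) class via the projections $\pi_i:G\to\langle x_i\rangle$.

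The main obstacle I anticipate is pinning down the complement and the action precisely: the "obvious" candidate $\langle\alpha_1^3,\alpha_2^3\rangle$ is a free product, not $G$, so the correct complement must mix the generators (for instance taking $\beta_i=\alpha_i^3$ composed with an inner automorphism, or working modulo $\Inn{G}$ first — note $\Out{G}\cong\FR{G}/\Inn{G}$ restricted appropriately, and one expects $\FR{G}/\Inn{G}\cong G$). In fact the cleanest argument is probably: show $\FR{G}/\Inn{G}\cong G$ by identifying it with $\langle \overline{\alpha_1^3},\overline{\alpha_2^3}\rangle$ subject to a relation making it $\mathbb{Z}/n\mathbb{Z}\ast\mathbb{Z}/n\mathbb{Z}\ast(\text{something})$ — then lift a copy of $G$ back into $\FR{G}$ as a complement, which is possible because the extension $1\to\Inn{G}\to\FR{G}\to G\to 1$ splits (the splitting being visible from the partial-conjugation generators). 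So the real work is (i) computing $\FR{G}/\Inn{G}$ from the Fouxe-Rabinovitch presentation and recognising it as $G$, and (ii) exhibiting the splitting; the trivial-intersection and normality parts are then routine, the former via the $\pi_i$ argument already used above.
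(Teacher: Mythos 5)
Your high-level strategy is the right one and essentially matches the paper's: identify $\Inn{G}\cong G$ as the normal copy, and show the extension $1\to\Inn{G}\to\FR{G}\to\FR{G}/\Inn{G}\to 1$ has quotient $G$ and splits, the whole computation resting on the known presentation of $\FR{G}$ on the partial conjugations. The paper does exactly this, but concretely: it takes the Collins--Gilbert presentation of $\FR{G}$ (Proposition 3.1 in \cite{CG90}), writes down an abstract group $\FR{G}'\cong G\rtimes G$ generated by $c_1,c_2,c_3$ (inner) and $\alpha_1^2,\alpha_2^3,\alpha_3^1$, and verifies by explicit mutually inverse homomorphisms that $\FR{G}'\cong\FR{G}$; the complement is $\langle\alpha_1^2,\alpha_2^3,\alpha_3^1\rangle$, i.e.\ three partial conjugations with pairwise distinct operating letters and pairwise distinct conjugated factors.

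However, your proposal has a genuine gap: the two steps you yourself identify as ``the real work'' --- recognising $\FR{G}/\Inn{G}$ as $G$ from the presentation, and exhibiting the splitting --- are not carried out, and the concrete identifications you do offer are wrong. Since $c_3=\alpha_1^3\alpha_2^3$, the images $\overline{\alpha_1^3}$ and $\overline{\alpha_2^3}$ are mutually inverse modulo $\Inn{G}$, so $\langle\overline{\alpha_1^3},\overline{\alpha_2^3}\rangle$ is only a cyclic group of order $n$; it cannot serve as your model for the quotient, and no complement ``based at $x_3$'' can work for the same reason. The correct candidate must use three distinct operating letters, as above, and then one still has to verify (via the relations $(\alpha_i^j)^n=1$, $[\alpha_i^j,\alpha_k^j]=1$, $[\alpha_i^j\alpha_k^j,\alpha_i^k]=1$) both that the quotient does not collapse below $\mathbb{Z}/n\mathbb{Z}\ast\mathbb{Z}/n\mathbb{Z}\ast\mathbb{Z}/n\mathbb{Z}$ and that the section respects the conjugation action giving $G\rtimes G$; this relation-checking is precisely the content of the paper's proof and is absent from yours. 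The retraction trick via $\pi_i$ from Lemma \ref{lemma:autstructure} does not substitute for it: trivial intersection with $\Inn{G}$ comes for free once one has a genuine section, whereas the hard part is showing the would-be section is injective, which the $\pi_i$ argument does not address.
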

This semi-direct product structure is a special case of Theorem 3.2 in \cite{GPR09}. The proof below is based on the presentation of $\FR{G}$ given in $\cite{CG90}$.
\begin{proof}
By definition, $\mathrm{FR}(G)$ is generated by the six partial conjugations $\alpha_1^2,\alpha_1^3$, $\alpha_2^1,\alpha_2^3, \alpha_3^1$ and $\alpha_3^2$.\\
Let $c_y:G \rightarrow G$ be the conjugation homomorphism $x \mapsto yxy^{-1}$ and denote by $c_i:=c_{x_i}$ the conjugation with the generator $x_i$ for $i \in \{1,2,3\}$.\\
Since the conjugation $c_1$ is the product $\alpha_2^1 \alpha_3^1$ (and for $i=2,3$ resp.) also the set $X=\{c_1, c_2, c_3, \alpha_1^2, \alpha_2^3, \alpha_3^1\}$ forms a generating set for $\mathrm{FR}(G)$. Since $G$ has trivial center, we have $\langle c_1, c_2, c_3 \rangle = \mathrm{Inn}(G) \cong G$. This is a normal subgroup of $\mathrm{FR}(G)$ (since it is normal in the whole automorphism group $\Aut{G}$) and the conjugation action is given as follows:\\
For $i=1,2,3$ the automorphism $\varphi c_i \varphi^{-1}$ equals $c_{\varphi(x_i)}$, i.e. the inner automorphism conjugating with $\varphi(x_i)$. It remains to be shown that $\mathrm{FR}(G)$ is isomorphic to the semidirect product of $\mathrm{Inn}(G)$ and the subgroup $\langle \alpha_1^2, \alpha_2^3, \alpha_3^1 \rangle \cong G$ with respect to the described action.\\
Let $\FR{G}'$ be the group with the presentation consisting of the generating set $X$ with the relations $x^n=1$ for $x \in X$ and $\alpha_i^j c_k (\alpha_i^j)^{-1}=c_{\alpha_i^j(x_k)}$ for $i,j,k \in \{1,2,3\}$ where the right hand side is written as a word in the generators $c_i$, e.g. $\alpha_1^2 c_1 (\alpha_1^2)^{-1}=c_2c_1c_2^{-1}$.\\
By construction, we have $\FR{G}' \cong G \rtimes G$ with respect to the action above.\\
By Proposition 3.1 in \cite{CG90}, $\mathrm{FR}(G)$ has a presentation where the generators are the partial conjugations with the relations:
\begin{itemize}
\item[$i)$] $(\alpha_i^j)^m=1$ for all $i \neq j$
\item[$ii)$] $[\alpha_i^j, \alpha_k^j]=1$ for $j=1,2,3$ and $i,k \neq j$
\item[$iii)$] $[\alpha_i^j \alpha_k^j, \alpha_i^k]=1$ for pairwise distinct $i,j,k$
\end{itemize}
In the following, we construct homomorphisms to show that these two presentations are isomorphic.
To distinguish clearly between them we introduce a new notation (which is also used by Collins, Gilbert in \cite{CG90}):\\ Let $X_i= \langle x_i \rangle$ be the free factor generated by $x_i$ and denote by $(X_i,x_j)$ the partial conjugation where the free factor $X_i$ is conjugated by the letter $x_j$. Thus, $(X_i,x_j)$ corresponds to $\alpha_i^j$ in our usual notation.\\
The relations in the presentation of \cite{CG90} hence are given as:
\begin{itemize}
\item[$i)$] $(X_i,x_j)^m=1$ for all $i \neq j$
\item[$ii)$] $[(X_i,x_j),(X_k,x_j)]=1$ for $j=1,2,3$ and $i,k \neq j$
\item[$iii)$] $[(X_i,x_j) (X_k,x_j), (X_i,x_k)]=1$ for pairwise distinct $i,j,k$
\end{itemize}
Define a homomorphism $\varphi:\mathrm{FR}(G)' \rightarrow \mathrm{FR}(G)$ by the following asignment: 
\begin{flalign*}
\begin{split}
&c_1 \mapsto (X_2,x_1) (X_3,x_1), ~~~ \alpha_1^2 \mapsto (X_1,x_2)\\ 
&c_2 \mapsto (X_1,x_2)(X_3,x_2), ~~~ \alpha_2^3 \mapsto (X_2,x_3)\\
&c_3 \mapsto (X_1,x_3)(X_2,x_3), ~~~ \alpha_3^1 \mapsto (X_3,x_1)
\end{split}
\end{flalign*}
To see that $\varphi$ indeed gives a well-defined homomorphism, one can check that the relations in $\FR{G}'$ are preserved by $\varphi$.\\
Vice versa we define a homomorphism $\psi:\mathrm{FR}(G) \rightarrow \mathrm{FR}(G)'$ by the following asignment: 
\begin{flalign*}
\begin{split}
&(X_1,x_2) \mapsto \alpha_1^2 , ~~~ (X_1,x_3) \mapsto c_3 (\alpha_2^3)^{-1}\\ 
&(X_2,x_1) \mapsto c_1 (\alpha_1^3)^{-1} , ~~~ (X_2,x_3) \mapsto \alpha_2^3\\
&(X_3,x_1) \mapsto \alpha_3^1, ~~~ (X_3,x_2) \mapsto c_2 (\alpha_1^2)^{-1}
\end{split}
\end{flalign*}
Again, one checks by some short calculations that $\psi$ preserves the relations in the presentation of $\FR{G}$. As an example, for the relation $iii)$ we have to show that the images of $(X_1,x_2)(X_3,x_2)$ and $(X_1,x_3)$ commute. We have:
\begin{flalign*}
\begin{split}
\psi(X_1,x_2) \psi(X_3,x_2) \psi(X_1,x_3)&=\alpha_1^2 c_2 (\alpha_1^2)^{-1} c_3 (\alpha_2^3)^{-1}\\
&=c_2 c_3 (\alpha_2^3)^{-1}=c_2 (\alpha_2^3)^{-1} c_3 \\
&=(\alpha_2^3)^{-1} (\alpha_2^3 c_2 (\alpha_2^3)^{-1}) c_3 \\
&= (\alpha_2^3)^{-1} (c_3 c_2 c_3^{-1}) c_3 = (\alpha_2^3)^{-1} c_3 c_2 \\
&=c_3 (\alpha_2^3)^{-1} c_2 = c_3 (\alpha_2^3)^{-1} \alpha_1^2 c_2 (\alpha_1^2)^{-1}\\
&=\psi(X_1,x_3) \psi(X_1,x_2) \psi(X_3,x_2)
\end{split}
\end{flalign*} 
By construction, the homomorphisms $\varphi$ and $\psi$ are mutually inverse. Thus, the two presentations are isomorphic and we have $\FR{G} \cong \FR{G}' \cong G \rtimes G$.
\end{proof}

With Lemma \ref{lemma:autstructure}, we obtain the following corollary.

\begin{corollary}\label{cor:3}
The group $\Aut{G}$ is isomorphic to $(G \rtimes G) \rtimes (F \rtimes \Sym{3})$ where ${F \cong (\mathbb{Z}/n\mathbb{Z}^{\times})^3}$ is the subgroup of factor automorphisms of $G$.\\
In particular, the group $\Out{G}$ is isomorphic to $G \rtimes (F \rtimes \Sym{3})$.
\end{corollary}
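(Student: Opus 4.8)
The plan is simply to combine the two structural results already in hand. By Lemma~\ref{lemma:autstructure} we have $\Aut{G} \cong \FR{G} \rtimes (F \rtimes \Sym{\underline{n}})$. Since all three free factors of $G$ have the same order $n$, here $\underline{n}=(n,n,n)$ and hence $\Sym{\underline{n}}=\Sym{3}$, while Remark~\ref{remark:structure}~a) identifies $F$ with $(\mathbb{Z}/n\mathbb{Z}^{\times})^3$. Substituting the isomorphism $\FR{G}\cong G\rtimes G$ of Lemma~\ref{lemma:struct3} into this decomposition then gives at once
$$\Aut{G}\cong (G\rtimes G)\rtimes (F\rtimes\Sym{3}),$$
which is the first assertion.

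For $\Out{G}$ I pass to the quotient by $\Inn{G}$. Recall that $G$ has trivial centre, so $\Inn{G}\cong G$, and that $\Inn{G}$ is a normal subgroup of $\Aut{G}$. Under the identification $\FR{G}\cong G\rtimes G$ constructed in the proof of Lemma~\ref{lemma:struct3}, the subgroup $\Inn{G}=\langle c_1,c_2,c_3\rangle$ is precisely the distinguished normal $G$-factor, so $\FR{G}/\Inn{G}\cong G$. Since $\Inn{G}\subseteq\FR{G}$ is normal in all of $\Aut{G}$, conjugation by $F\rtimes\Sym{3}$ preserves $\Inn{G}$ and therefore induces an action on $\FR{G}/\Inn{G}$; consequently the semidirect decomposition descends to the quotient and yields
$$\Out{G}=\Aut{G}/\Inn{G}\cong (\FR{G}/\Inn{G})\rtimes (F\rtimes\Sym{3})\cong G\rtimes(F\rtimes\Sym{3}).$$

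The argument is short, and the only thing demanding a little attention — the main (mild) obstacle — is the final quotient bookkeeping: one has to confirm that dividing a semidirect product $N\rtimes H$ by a subgroup $M\trianglelefteq N$ which is normal in the whole group produces $(N/M)\rtimes H$ with $H$ acting through the induced map, and that $\Inn{G}$ genuinely is the normal $G$-factor of $\FR{G}\cong G\rtimes G$ (and not, say, a diagonally embedded copy). Both points are immediate from the explicit generators $c_1,c_2,c_3$ and the conjugation relations recorded in the proof of Lemma~\ref{lemma:struct3}, so no real computation remains.
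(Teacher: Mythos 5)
Your proposal is correct and is essentially the paper's intended argument: the paper deduces the corollary directly by substituting Lemma~\ref{lemma:struct3} into Lemma~\ref{lemma:autstructure} (with $\Sym{\underline{n}}=\Sym{3}$ in the pure case), and obtains $\Out{G}$ by quotienting out $\Inn{G}$, which — exactly as you verify — is the normal $G$-factor $\langle c_1,c_2,c_3\rangle$ of $\FR{G}\cong G\rtimes G$ (cf.\ Remark~\ref{remark:tree3}). Your extra care about the quotient of a semidirect product by a subgroup of the kernel that is normal in the whole group is the right point to check and is handled correctly.
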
 

\begin{remark}\label{remark:tree3}
By Lemma \ref{lemma:struct3} the subgroup $\langle \alpha_1^2, \alpha_2^3, \alpha_3^1 \rangle \subseteq \FR{G}$ maps isomorphically onto $\FR{G}/\mathrm{Inn}(G) \cong G$. So we identify $G$ with $\langle \alpha_1^2, \alpha_2^3, \alpha_3^1 \rangle$. It acts on the associated Bass-Serre tree $T$ (cf. \S 4.5 in \cite{S77}): Let $A_1= \langle \alpha_1^2 \rangle, A_2= \langle \alpha_2^3 \rangle$ and $A_3= \langle \alpha_3^1 \rangle$. Note that $G$ has the following decomposition as a graph of groups
\begin{figure}[H]
\begin{center}
\begin{tikzpicture}[scale=0.5]
\node (a1) at (0,0) [label=$\{1\}$]{};
\node (a2) at (2,1.5) [label=right:$A_1$]{};
\node (a3) at (-2,1.5) [label=left:$A_3$]{};
\node (a4) at (0,-2.25) [label=below:$A_2$]{};
\draw[fill=black]  (0,0) circle (2pt);
\draw[fill=black]  (2,1.5) circle (2pt);
\draw[fill=black]  (-2,1.5) circle (2pt);
\draw[fill=black]  (0,-2.25) circle (2pt);
\draw[->, fill=black]  (0,0) -- (1,0.75);
\draw[fill=black]  (1,0.75)--(2,1.5);
\draw[->, fill=black]  (0,0)--(-1,0.75);
\draw[fill=black]  (-1,0.75)--(-2,1.5);
\draw[->, fill=black]  (0,0)--(0,-1.125);
\draw[fill=black]  (0,-1.125)--(0,-2.25);
\end{tikzpicture}
\end{center}
\end{figure}
\noindent where all edge homomorphisms are trivial. Thus, the Bass-Serre tree $T$ is given as follows: The set of vertices of $T$ is the disjoint union of $G=G/\{1\}$, $G/A_1$, $G/A_2$ and $G/A_3$. A vertex $g \in G$ forms an edge with a  coset $hA_i$ if $g \in hA_i$.\\
The group $G$ acts on $T$ by left multiplication. The vertex stabilisers are either trivial (for vertices $g \in G$) or conjugate to an $A_i$. In particular, $G$ acts on $T$ without a global fixed point.
\end{remark}

\begin{remark}
By Corollary \ref{cor:3}, we have $\mathrm{Out}(G)\cong G \rtimes (F \rtimes \Sym{3})$. The action of $F \rtimes \Sym{3}$ on $G$ is given as follows:\\
Let $\varepsilon \in F$ be a factor automorphism mapping $x_k$ to the power $x_k^{\varepsilon_k}$. Then, we have $\varepsilon \alpha_i^{i+1} \varepsilon^{-1}=(\alpha_i^{i+1})^{\varepsilon_{i+1}}$. For a permutation $\pi \in \Sym{3}$ we find $\pi \alpha_i^{i+1} \pi^{-1}=\alpha_{\pi(i)}^{\pi(i+1)}$ which has to be interpreted in the quotient. For example, $(12) \alpha_1^2 (12)=\alpha_2^1=(\alpha_3^1)^{-1}$ in $\Aut{G}/\mathrm{Inn}(G)$.\\
The permutation group $\Sym{3}$ acts on the set $\{(\alpha_1^2)^{\pm 1},(\alpha_2^3)^{\pm 1},(\alpha_3^1)^{\pm 1}\}$ by signed permutations.
\end{remark}

\begin{proposition}
The group $\Aut{G}$ does not have property FA.
\end{proposition}
\begin{proof}
Analogously to the proof of Proposition 3.2, we construct an action of $\Out{G}$ on the tree $T$ described in Remark \ref{remark:tree3} by extending the action of ${G= \langle \alpha_1^2, \alpha_2^3, \alpha_3^1 \rangle}$.\\
By the previous Remark 4.4, the subgroup $F \rtimes \Sym{3}$ preserves the set of free factors $\{A_1,A_2,A_3\}$ and fixes the trivial subgroup $\{1\}$. Thus, we can apply Lemma \ref{lemma:extend} to extend the action of $G$ on $V(T)$ to an action of $\Out{G}$. It is easy to check from the definition that $\Out{G}$ maps edges to edges and we obtain a well-defined action of $\Out{G}$ on the tree $T$.\\
Note that $T$ is bipartite. Each vertex is of element or coset type and $\Out{G}$ acts type-preserving. Therefore, the $\Out{G}$-action is without inversion.\\
By the description of the stabilisers of the $G$-action in Remark \ref{remark:tree3}, there is no global fixed point and hence $\Out{G}$ does not have FA. As $\Out{G}$ is a quotient of $\Aut{G}$ and FA is preserved by quotients by Observation \ref{obs:FA} $i)$, this implies that $\Aut{G}$ does not have property FA.
\end{proof}

\begin{remark}\label{remark:amalgam3}
Analogously to Remark \ref{remark:amalgam2} the proof allows us to write $\Out{G}$ as an amalgamated product.\\
Since $G$ acts transitively on the vertices of element type and $\mathrm{Sym}(3)$ acts transitively on the set of free factors $\{A_1,A_2,A_3\}$ the edge $\{1, A_1\}$ forms a fundamental domain for the action of $\Out{G}$ on $T$.\\
By Bass-Serre theory (cf. \S 4, Theorem 6 in \cite{S77}), $\Out{G}$ is isomorphic to the free product of the vertex stabilisers $\Out{G}_1$ and $\Out{G}_{A_1}$ amalgamated along the stabiliser of the whole edge $\Out{G}_{\{1,A_1\}}$.\\
With Corollary \ref{cor:3} one can again calculate the stabilisers and obtains \\
${\Out{G}_1=F \rtimes \Sym{3},}$ ${\Out{G}_{A_1}=A_1 \rtimes (F \rtimes \Sym{2})}$ and therefore
$$\Out{G} \cong F \rtimes \Sym{3} \ast_{F \rtimes \Sym{2}} A_1 \rtimes (F \rtimes \Sym{2})$$
where $\Sym{2}$ is identified with the subgroup of $\Sym{3}$ generated by the transposition $(1 \ 2)$.
\end{remark}

\section{$\Aut{\ast_{i=1}^m \mathbb{Z}/n\mathbb{Z}}$ has property FA for $m \geq 4$}

In this section, we prove the affirmative part of Theorem 1.1. Assume that $G$ is a free product of $m \geq 4$ copies of $\mathbb{Z}/n\mathbb{Z}$ for some $n \geq 2$. Fix a presentation $${G=\langle x_1, \ldots, x_m \mid x_i^n, i \in  \{1,\ldots, m\} \rangle}.$$
The main step in the proof is to show that given any action of $\Aut{G}$ on a tree $T$ the Fouxe-Rabinovitch subgroup $\FR{G}$ must fix a vertex of $T$. For this we need the \emph{Subtree Cycle Lemma} which gives conditions under which there exists a certain pair of subtrees with non-empty intersection.

In our application, the considered subtrees will be fixed point sets of partial conjugations. With Helly's Theorem for trees (Theorem 2.4) this yields the required global fixed point of $\FR{G}$.

First, we introduce some notation.

\begin{definition} Let $k \in \mathbb{N}$. For $i,j \in \{1, \ldots, k\}$ we define the \emph{cyclic distance} of $i,j$ as
$$d_c(i,j):= \min\{\lvert x- y \rvert \mid x \in i+k\mathbb{Z}, y \in j+k\mathbb{Z}\}.$$
\end{definition}

The following lemma is crucial for the proof of the case $m \geq 4$ in Theorem 1.1. We apply it in Corollary \ref{corollary:diagonal} to obtain a technique for determining fixed point sets with non-empty intersection. 

\begin{lemma}[Subtree Cycle Lemma] Let $T$ be a tree with subtrees $A_1, \ldots, A_k,$ ${k \geq 4.}$ If $A_i \cap A_j \neq \emptyset$ for all $i,j\in \{1, \ldots, k\}$ with $d_c(i,j)=1$ then there exist $i,j \in \{1, \ldots, k\}$ such that $d_c(i,j) \geq 2$ and $A_i \cap A_j \neq \emptyset$.
\end{lemma}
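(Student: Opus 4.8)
The plan is to argue by contradiction: suppose $A_i \cap A_j \neq \emptyset$ whenever $d_c(i,j) = 1$, but $A_i \cap A_j = \emptyset$ whenever $d_c(i,j) \geq 2$. Think of the indices $1, \ldots, k$ arranged cyclically, so that $A_1, A_2, \ldots, A_k, A_1$ is a ``cycle of subtrees'' in which consecutive members meet and non-consecutive members are disjoint. For each pair of consecutive indices pick a vertex $p_i \in A_i \cap A_{i+1}$ (indices mod $k$), giving a cyclic sequence of ``bridge vertices'' $p_1, \ldots, p_k$. The key structural input is the standard fact about trees that between any two vertices there is a unique geodesic, and that a subtree, being convex, contains the geodesic between any two of its vertices; moreover for three subtrees that pairwise intersect Helly's Theorem (Theorem~\ref{theorem:helly}) already gives a common point, so in particular any three consecutive $A_i$ share a vertex.

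First I would establish that the ``median'' or ``Y-configuration'' behaviour forces a contradiction. Concretely, consider three consecutive subtrees $A_{i-1}, A_i, A_{i+1}$. Since $A_{i-1} \cap A_i \neq \emptyset$ and $A_i \cap A_{i+1} \neq \emptyset$ but $A_{i-1} \cap A_{i+1} = \emptyset$, the subtree $A_i$ contains both $p_{i-1}$ and $p_i$, hence contains the geodesic $[p_{i-1}, p_i]$, while this geodesic must have positive length and, more importantly, the point of $A_i$ closest to $A_{i-1}$ is distinct from the point closest to $A_{i+1}$ — otherwise that common point would lie in $A_{i-1} \cap A_{i+1}$. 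Thus each $A_i$ ``enters from the $A_{i-1}$ side'' and ``leaves toward the $A_{i+1}$ side'' at two genuinely different vertices, and the segment of $A_i$ between them is a well-defined nondegenerate arc $\gamma_i$. I would then show the arcs $\gamma_1, \ldots, \gamma_k$ fit together into a closed path: the endpoint of $\gamma_i$ on the $A_{i+1}$ side and the endpoint of $\gamma_{i+1}$ on the $A_i$ side are connected through $A_i \cap A_{i+1}$ (here I would invoke that $A_i \cap A_{i+1}$ is itself a subtree, hence connected, and both endpoints lie in it). Concatenating produces a closed walk in $T$ traversing at least $k \geq 4$ genuinely distinct edges that never backtracks — contradicting the fact that a tree has no cycles.

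The main obstacle I anticipate is making the non-backtracking claim airtight: one must rule out the closed walk collapsing on itself, and for this the disjointness hypotheses for cyclic distance $\geq 2$ are exactly what is needed, but they must be deployed carefully. The danger is that $\gamma_i$ and $\gamma_j$ for $|i - j| \geq 2$ could a priori share edges even though $A_i \cap A_j = \emptyset$ is false — but of course $\gamma_i \subseteq A_i$ and $\gamma_j \subseteq A_j$, so disjointness of the $A$'s forces disjointness of the arcs, and similarly the ``bridge'' connectors in $A_i \cap A_{i+1}$ cannot meet arcs $\gamma_j$ for $j \notin \{i, i+1\}$ since $A_i \cap A_{i+1}$ and $A_j$ would then intersect, again forbidden because $d_c$ of non-adjacent indices is $\geq 2$. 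Assembling these disjointness statements shows the closed walk is an embedded cycle of length $\geq k$, which is impossible in a tree, completing the proof. A cleaner alternative I would also consider: induct on $k$, using Helly on the three subtrees $A_1, A_2, A_3$ to replace them by a single subtree meeting $A_k$ and $A_4$, thereby reducing a $k$-cycle to a $(k-1)$-cycle, with the base case $k = 4$ handled directly by the median-vertex argument above; this sidesteps the bookkeeping of the long closed walk but requires checking the new subtree still satisfies the consecutive-intersection hypothesis.
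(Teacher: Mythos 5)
Your overall strategy is the same as the paper's (argue by contradiction, pick bridge points in the consecutive intersections $A_i\cap A_{i+1}$, concatenate the geodesics between them into a closed edge path, and contradict the fact that a tree has no reduced cycles), but the step you yourself flag as the main obstacle is precisely where the argument has a genuine gap, and the disjointness facts you assemble do not close it. What you verify is that \emph{non-adjacent} pieces are disjoint: $\gamma_i\cap\gamma_j=\emptyset$ when $d_c(i,j)\geq 2$, and the connectors inside $A_i\cap A_{i+1}$ miss the other $A_j$. But backtracking in the concatenated walk does not occur between non-adjacent pieces; it occurs at the junctions between \emph{consecutive} pieces. Nothing in your construction prevents $\gamma_{i+1}$ from beginning by retracing the final edges of $\gamma_i$: such an overlap lies in $A_i\cap A_{i+1}$, which may be large, so none of your hypotheses is violated. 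Note also that the union of your pieces is a connected union of subtrees, hence itself a subtree, and a closed \emph{walk} inside a subtree is perfectly possible as long as it backtracks; so ``assembling these disjointness statements'' cannot upgrade the walk to an embedded cycle. Indeed, the assertion that a cyclic family of pieces with consecutive intersections and disjoint non-adjacent members yields an embedded cycle is essentially the Subtree Cycle Lemma itself, so at this point the argument is circular rather than complete.

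The missing ingredient is a device that controls the junctions, and this is exactly what the paper adds: the bridge points $x_{i,i+1}\in A_i\cap A_{i+1}$ are chosen so that $\sum_i d(x_{i,i+1},x_{i+1,i+2})$ is minimal. Then, if the cycle backtracked at a junction, the backtrack vertex $v$ would lie in $A_{i-1}\cap A_i$ (both adjacent geodesics lie in their respective subtrees by convexity), and replacing $x_{i-1,i}$ by $v$ would strictly decrease the sum --- contradicting minimality. Some such choice (global minimality, or a projection/median choice of each bridge point) must be made and exploited; with arbitrarily chosen bridge points the non-backtracking claim is unjustified. Your alternative inductive sketch is attractive --- merging two consecutive subtrees into their union (again a subtree, since they intersect) does reduce a $k$-cycle to a $(k-1)$-cycle with the adjacency bookkeeping working out --- but it bottoms out at the base case $k=4$, i.e. the Diagonal Lemma, which you propose to handle ``by the median-vertex argument above''; since that argument is the one with the gap, the induction does not circumvent the problem.
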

\begin{proof} Assume for contradiction that $A_i \cap A_j = \emptyset$ for all $i,j \in \{1, \ldots, k\}$ with $d_c(i,j) \geq 2$. Pick points $x_{i,i+1} \in A_i \cap A_{i+1}$ and $x_{k,1} \in A_k \cap A_1$ such that $\sum\limits_{i=1}^{k} d(x_{i,i+1},x_{i+1,i+2})$ is minimal (subscripts are taken$\mod k$).\\
Let $p_i$ be the shortest edge path from $x_{i-1,i}$ to $x_{i,i+1}$ and $c$ be the cycle starting at $x_{k,1}$ consisting of the paths $p_1, \ldots ,p_k$.\\
\textbf{Claim}: $c$ is a reduced edge path in $T$.\\
If $c$ is not reduced, it must contain an edge $e$ and its opposite $\bar{e}$ as consecutive edges. Since all the $p_i$ were chosen to have minimal length those consecutive edges can not belong to one of the $p_i$. Therefore, there must exist $i \in \{1, \ldots, k\}$ such that $p_{i-1}$ ends with $e$ and $p_i$ begins with the opposite edge $\bar{e}$. Let $v$ be the initial vertex of $e$ (which also is the terminal vertex of the edge $\bar{e}$).\\
Since $T$ is uniquely geodesic and the $A_j$ are convex subspaces, each path $p_j$ is contained in the subtree $A_j$. The vertex $v$ lies on the path $p_{i-1}$ and hence is contained in $A_{i-1}$. As the end point of $\bar{e}$ it also lies on the path $p_i$ which implies $v \in A_i$. This yields $v \in A_{i-1} \cap A_i$. By construction, we have $$d(x_{i-2,i-1},v)=d(x_{i-2,i-1},x_{i-1,i})-1 \text{ and } d(v,x_{i,i+1}) =d(x_{i-1,i},x_{i,i+1})-1.$$
But this contradicts the minimality of $\sum\limits_{i=1}^{k} d(x_{i,i+1},x_{i+1,i+2})$. Therefore no path $p_i$ can start with the opposite edge of the terminal edge of $p_{i-1}$. Thus, the cycle $c$ is reduced which is a contradiction to $T$ being a tree. $~~\lightning
$\\
So the assumption $A_i \cap A_j = \emptyset$ for all $i,j \in \{1, \ldots, k\}$ with $d_c(i,j) \geq 2$ was wrong and we obtain the statement of the lemma.
\end{proof}

\noindent
\textbf{Special Case}(Diagonal Lemma): Let $T$ be a tree with subtrees $A_1,A_2,B_1,B_2$. If $A_i \cap B_j \neq \emptyset$ for all $i,j\in \{1, 2\}$ then either $A_1 \cap A_2 \neq \emptyset$ or $B_1 \cap B_2 \neq \emptyset$.

\begin{remark}
The Diagonal Lemma is also a special case of the colorful Helly's Theorem (see Theorem 3.5 in \cite{D17}) which is a generalisation of Helly's Theorem (Theorem 2.4).
\end{remark}

The next Corollary specifies the Diagonal Lemma to the situation of fixed point sets of elements of a group which acts on a tree.

\begin{corollary}\label{corollary:diagonal} Assume a group $G$ acts on a tree $T$ and that there are elements $a_1,a_2,b_1,b_2 \in G$ such that $a_i$ and $b_j$ have a common fixed point for all ${i,j \in \{1,2\}.}$ Then, either $a_1$ and $a_2$ or $b_1$ and $b_2$ have a common fixed point.
\end{corollary}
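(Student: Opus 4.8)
The plan is to translate the statement about group elements into one about their fixed point sets and then invoke the Diagonal Lemma directly. First I would put $A_i := \mathrm{Fix}(a_i)$ and $B_j := \mathrm{Fix}(b_j)$ for $i,j \in \{1,2\}$, meaning the sets of vertices of $T$ fixed by the respective elements. The hypothesis that $a_i$ and $b_j$ share a fixed point says exactly that $A_i \cap B_j \neq \emptyset$ for all $i,j \in \{1,2\}$; in particular each of $A_1, A_2, B_1, B_2$ is non-empty.

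The one point that needs checking before the Diagonal Lemma can be applied is that these four sets are subtrees of $T$. Here I would argue convexity: if $g \in G$ fixes vertices $u$ and $v$, then $g$ maps the unique geodesic $[u,v]$ onto the geodesic $[g(u),g(v)] = [u,v]$, and since $g$ fixes $u$ it preserves the distance to $u$; as the vertex of $[u,v]$ at a prescribed distance from $u$ is unique, $g$ fixes every vertex of $[u,v]$. Hence $\mathrm{Fix}(g)$ is convex, and a non-empty convex vertex set in a tree spans a subtree. Applying this to $g = a_i$ and $g = b_j$ shows $A_1, A_2, B_1, B_2$ are subtrees with all "cross" intersections $A_i \cap B_j$ non-empty.

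Now the Diagonal Lemma (the special case of the Subtree Cycle Lemma stated above) yields $A_1 \cap A_2 \neq \emptyset$ or $B_1 \cap B_2 \neq \emptyset$. In the former case any vertex of $A_1 \cap A_2$ is fixed by both $a_1$ and $a_2$, so $a_1$ and $a_2$ have a common fixed point; in the latter case $b_1$ and $b_2$ do. Either way the claim follows. I do not expect a genuine obstacle here, since all the geometric content already sits in the Subtree Cycle Lemma and its Diagonal special case; the only step demanding a moment's care is the convexity of $\mathrm{Fix}(g)$, which is the standard fact that an element acting on a tree and fixing two vertices fixes the segment joining them.
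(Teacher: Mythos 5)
Your proof is correct and is essentially the paper's own argument: the paper likewise applies the Diagonal Lemma to the subtrees $A_i=\mathrm{Fix}(a_i)$ and $B_j=\mathrm{Fix}(b_j)$, merely omitting the routine verification that these fixed point sets are (non-empty) subtrees, which you spell out. No further comment is needed.
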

\begin{proof} Apply the Diagonal Lemma to the subtrees $A_i= \mathrm{Fix}(a_i)$ and ${B_i=\mathrm{Fix}(b_i)}$ for $i=1,2$.
\end{proof}

\begin{remark}
The Diagonal Lemma can be visualised as follows: Construct a graph by taking four vertices labelled by $A_1,A_2,B_1$ and $B_2$ and inserting an edge between two vertices if their label subtrees have non-empty intersection. We obtain the following square graph:
\begin{figure}[H]
\begin{center}
\begin{tikzpicture}[scale=0.5]
\node (a1) at (2,2) [label=right:$A_2$]{};
\node (a2) at (2,-2) [label=right:$B_1$]{};
\node (a3) at (-2,-2) [label=left:$A_1$]{};
\node (a4) at (-2,2) [label=left:$B_2$]{};
\draw[fill=black]  (2,2) circle (1pt);
\draw[fill=black]  (2,-2) circle (1pt);
\draw[fill=black]  (-2,2) circle (1pt);
\draw[fill=black]  (-2,-2) circle (1pt);
\draw[fill=black]  (2,2)--(-2, 2);
\draw[fill=black]  (2,2)--(2, -2);
\draw[fill=black]  (-2,-2)--(-2, 2);
\draw[fill=black]  (-2,-2)--(2, -2);
\draw[dotted]  (-2,-2)--(2, 2);
\draw[dotted]  (-2,2)--(2, -2);
\end{tikzpicture}
\end{center}
\end{figure}
Then, the Diagonal Lemma states that either $A_1 \cap A_2$ or $B_1 \cap B_2$ must be non-empty. Thus, in our diagram must \emph{exist} one of the dotted diagonals.\\
The situation of the Corollary shall be visualised by the corresponding diagram where we label the vertices by the elements $a_1,a_2,b_1,b_2 \in G$ and draw a line if their fixed point sets have non-empty intersection.
\begin{figure}[H]
\begin{center}
\begin{tikzpicture}[scale=0.5]
\draw[fill=black]  (2,2) circle (1pt);
\draw[fill=black]  (2,-2) circle (1pt);
\draw[fill=black]  (-2,2) circle (1pt);
\draw[fill=black]  (-2,-2) circle (1pt);
\draw[fill=black]  (2,2)--(-2, 2);
\draw[fill=black]  (2,2)--(2, -2);
\draw[fill=black]  (-2,-2)--(-2, 2);
\draw[fill=black]  (-2,-2)--(2, -2);
\draw[dotted]  (-2,-2)--(2, 2);
\draw[dotted]  (-2,2)--(2, -2);
\node (a1) at (2,2) [label=right:$a_2$]{};
\node (a2) at (2,-2) [label=right:$b_1$]{};
\node (a3) at (-2,-2) [label=left:$a_1$]{};
\node (a4) at (-2,2) [label=left:$b_2$]{};
\end{tikzpicture}
\end{center}
\end{figure}
\end{remark}

\begin{theorem}\label{theorem:mgeq4}
Let $m \geq 4, n  \geq 2$ and $G=\ast_{i=1}^m \mathbb{Z}/n\mathbb{Z}$ be the free product of $m$ copies of $\mathbb{Z}/n\mathbb{Z}$. Then, $\Aut{G}$ has FA.
\end{theorem}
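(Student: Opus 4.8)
The plan is to strip the finite part off $\Aut{G}$ and reduce to a fixed-point statement for the Fouxe--Rabinovitch subgroup, which I would then attack with the Subtree Cycle Lemma. \emph{Reduction to $\FR{G}$:} By Lemma \ref{lemma:autstructure} one has $\Aut{G}\cong\FR{G}\rtimes(F\rtimes\Sym{m})$, where I write $\Sym{m}$ for $\Sym{\underline n}$ since all free factors have the same order $n$. The complement $F\rtimes\Sym{m}$ is a \emph{finite} group --- indeed $F\cong(\mathbb{Z}/n\mathbb{Z}^{\times})^{m}$ by Remark \ref{remark:structure} --- and every finite group has property FA. Since $\FR{G}$ is normal in $\Aut{G}$ (Lemma \ref{lemma:FRnormal}), Observation \ref{obs:FA}~$ii)$ reduces the theorem to showing that $\FR{G}$ itself has property FA.

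\emph{Reduction to pairs of partial conjugations:} Fix an action of $\FR{G}$ on a tree $T$ without inversion. As $\FR{G}$ is generated by the $m(m-1)$ partial conjugations $\alpha_i^j$ ($i\ne j$), Corollary \ref{corollary:helly} reduces the problem to exhibiting, for each pair of partial conjugations, a common fixed vertex of $T$. Each $\alpha_i^j$ has finite order $n$, hence is elliptic, and two \emph{commuting} finite-order elements always have a common fixed point (the second normalises the non-empty fixed subtree of the first and, having finite order, fixes a vertex of it). A short computation shows that two partial conjugations commute whenever they share the same conjugator or have disjoint index pairs, and one checks that the remaining (non-commuting) pairs are exactly those of the shapes $\alpha_i^j$ with $\alpha_i^k$, $\alpha_i^j$ with $\alpha_j^i$, $\alpha_i^j$ with $\alpha_j^k$, and $\alpha_i^j$ with $\alpha_k^i$ (indices distinct). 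So it remains to find common fixed points for these.

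\emph{The heart of the proof:} this is where $m\ge4$ enters. For a non-commuting pair one builds a short cyclic chain of partial conjugations whose consecutive members commute (so consecutive fixed subtrees meet) and applies the Subtree Cycle Lemma, or --- for a chain of length four --- its diagonal special case, Corollary \ref{corollary:diagonal}. The hypothesis $m\ge4$ is used precisely to guarantee that there are always \emph{spare} indices from which to manufacture the auxiliary members of such a chain. For instance, to treat $\alpha_i^j$ and $\alpha_i^k$, pick $l\notin\{i,j,k\}$ (possible exactly because $m\ge4$) and consider the $4$-cycle $(\alpha_i^j,\ \alpha_l^j,\ \alpha_i^k,\ \alpha_l^k)$: every consecutive pair either shares a conjugator or has disjoint index pair, hence commutes, so Corollary \ref{corollary:diagonal} forces a common fixed point either for $\alpha_i^j,\alpha_i^k$ --- which is what we want --- or for $\alpha_l^j,\alpha_l^k$. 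Analogous chains, again built from spare indices, handle the other shapes of non-commuting pairs. One then has to combine the resulting family of ``either/or'' conclusions: every non-commuting pair $(\alpha,\beta)$ sits in many such $4$-cycles through the (large, because $m\ge4$) common neighbourhood of $\alpha$ and $\beta$ in the commuting graph, and a pigeonhole/parity argument over the index set $\{1,\dots,m\}$ is meant to rule out all the undesired alternatives simultaneously and conclude that \emph{every} pair of partial conjugations has a common fixed point. Carrying out this last bookkeeping --- showing that the either/or conclusions of the Subtree Cycle Lemma cannot conspire to leave some pair uncovered --- is the main obstacle, and it is exactly the point that must break for $m=2,3$, consistently with the negative results of Sections 3 and 4.

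\emph{Conclusion:} once all pairs of partial conjugations have common fixed points, Corollary \ref{corollary:helly} yields a vertex fixed by every partial conjugation, hence by all of $\FR{G}$; as the action was arbitrary, $\FR{G}$ has property FA, and therefore so does $\Aut{G}$ by the first reduction.
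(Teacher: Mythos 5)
Your overall skeleton---reducing to pairs of partial conjugations, the commuting squares, Corollary \ref{corollary:diagonal}---matches the paper's, but your first reduction step breaks the proof. Passing via Observation \ref{obs:FA}~$ii)$ to the claim that $\FR{G}$ itself has property FA replaces the theorem by a \emph{false} statement. Indeed, every partial conjugation maps each free factor to a conjugate of itself, so the normal closure $N$ of $x_4,\ldots,x_m$ is invariant under all of $\FR{G}$ and, as in Lemma \ref{lemma:inducedmap}, there is an induced homomorphism $\FR{G}\to\Aut{G/N}$ with $G/N\cong(\mathbb{Z}/n\mathbb{Z})^{\ast 3}$; it sends $\alpha_i^j$ to the corresponding partial conjugation of $G/N$ when $i,j\le 3$ and to the identity otherwise, so its image is the full Fouxe--Rabinovitch group of $G/N$, which by Lemma \ref{lemma:struct3} is $G/N\rtimes G/N$ and hence surjects onto the free product $G/N$. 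Since $G/N$ acts on its Bass--Serre tree without a global fixed point, Observation \ref{obs:FA}~$i)$ shows that $\FR{G}$ does \emph{not} have FA for any $m\ge 4$. Pulling that action back gives an action of $\FR{G}$ on a tree in which, say, $\alpha_1^2$ and $\alpha_2^1$ have disjoint fixed subtrees, even though every commuting pair of partial conjugations still has a common fixed point (most generators act trivially there). This also shows that the bookkeeping you postpone (``a pigeonhole/parity argument over the index set'') cannot exist for an arbitrary $\FR{G}$-action: in that pulled-back action every either/or conclusion of Corollary \ref{corollary:diagonal} is satisfied by the unwanted alternative, yet not all pairs share a fixed vertex. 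So the step you yourself flag as ``the main obstacle'' is not mere bookkeeping; it is unprovable in your setting.

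The missing idea is that one must keep the ambient $\Aut{G}$-action and prove only the weaker statement that $\FR{G}$ fixes a vertex in every tree on which the whole group $\Aut{G}$ acts. Then the dichotomy from Corollary \ref{corollary:diagonal} collapses by symmetry: in your square $a_1=\alpha_i^j$, $a_2=\alpha_i^k$, $b_1=\alpha_l^j$, $b_2=\alpha_l^k$, the permutation automorphism $\pi$ transposing $x_i$ and $x_l$ (it exists because all factors have order $n$, and $l$ exists because $m\ge4$) satisfies $b_t=\pi a_t\pi^{-1}$, hence $\mathrm{Fix}(b_t)=\pi\,\mathrm{Fix}(a_t)$, so $\mathrm{Fix}(a_1)\cap\mathrm{Fix}(a_2)\neq\emptyset$ if and only if $\mathrm{Fix}(b_1)\cap\mathrm{Fix}(b_2)\neq\emptyset$, and the desired pair has a common fixed point; the other non-commuting shapes are handled by the same trick (the mutual pair $\alpha_i^j,\alpha_j^i$ with two spare indices, and the mixed shape using the previous cases). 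Since $\pi\notin\FR{G}$, this argument is unavailable once you have discarded the rest of $\Aut{G}$. The endgame then also changes: instead of Observation \ref{obs:FA}~$ii)$, one stays inside the given tree --- the finite group $F\rtimes\Sym{m}$ normalises $\FR{G}$ (Lemma \ref{lemma:FRnormal}), hence stabilises the non-empty subtree $T^{\FR{G}}$, fixes a vertex in it, and by Lemma \ref{lemma:autstructure} that vertex is fixed by all of $\Aut{G}$.
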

\begin{proof}Let $\mathrm{Aut}(G)$ act on a tree $T$. We first show that the subgroup $\mathrm{FR}(G)$ generated by partial conjugations has a global fixed point in $T$.
For this we show the following: Any two partial conjugations $\alpha_i^{j}$ and $\alpha_{r}^s$ have a common fixed point.\\
Note that each partial conjugation $\alpha_{i}^j$ has finite order $n$ and therefore must fix a vertex $v \in T$. If we have $s=j$ or $\{i,j\} \cap \{r,s\} = \emptyset$, then the partial conjugations $\alpha_i^j$ and $\alpha_r^s$ commute and therefore have a common fixed point in $T$. Otherwise we can distinguish between three different cases:\\
\begin{itemize}
\item[Case 1:] $i=r$ and $s \neq j$\\
Since $m \geq 4$ there exists $l \in \{1, \ldots m\} \setminus \{i,j,s\}$. Now we apply the previous corollary to the elements $a_1=\alpha_i^j, a_2=\alpha_i^s$ and $b_1=\alpha_l^j, b_2=\alpha_l^s$. The elements $a_k$ and $b_k$ commute for $k=1,2$ because they have the same operating letter ($x_j$ for $k=1$ resp. $x_s$ for $k=2$). Since $\{i,j\} \cap \{l,s\}=\emptyset$ and $\{i,s\} \cap \{j,l\}=\emptyset$ also $a_1$ commutes with $b_2$ and $b_1$ commutes with $a_2$. Thus, $a_k$ and $b_{k'}$ have a common fixed point for $k,k' \in \{1,2\}$.
\begin{figure}[H]
\begin{center}
\begin{tikzpicture}[scale=0.5]
\draw[fill=black]  (2,2) circle (1pt);
\draw[fill=black]  (2,-2) circle (1pt);
\draw[fill=black]  (-2,2) circle (1pt);
\draw[fill=black]  (-2,-2) circle (1pt);
\draw[fill=black]  (2,2)--(-2, 2);
\draw[fill=black]  (2,2)--(2, -2);
\draw[fill=black]  (-2,-2)--(-2, 2);
\draw[fill=black]  (-2,-2)--(2, -2);
\draw[dotted]  (-2,-2)--(2, 2);
\draw[dotted]  (-2,2)--(2, -2);
\node (a1) at (2,2) [label=right:$\alpha_i^s$]{};
\node (a2) at (2,-2) [label=right:$\alpha_l^j$]{};
\node (a3) at (-2,-2) [label=left:$\alpha_i^j$]{};
\node (a4) at (-2,2) [label=left:$\alpha_l^s$]{};
\end{tikzpicture}
\end{center}
\end{figure} 
By Corollary 5.4, either $\mathrm{Fix}(a_1) \cap \mathrm{Fix}(a_2)$ or ${\mathrm{Fix}(b_1) \cap \mathrm{Fix}(b_2) }$ is non-empty. Let $\pi \in \mathrm{Aut}(G)$ be the automorphism induced by the transposition of the generators $x_i$ and $x_l$. Then we have $b_k=\pi a_k \pi^{-1}$ and hence ${\mathrm{Fix}(b_k)=\pi\mathrm{Fix}(a_k)}$  for $k=1,2$. Therefore $\mathrm{Fix}(a_1) \cap \mathrm{Fix}(a_2)$  is non-empty if and only if $\mathrm{Fix}(b_1) \cap \mathrm{Fix}(b_2)$  is non-empty. Since one of those intersections must be non-empty, both of them are non-empty and $\alpha_i^j$ and $\alpha_i^s$ have a common fixed point in $T$.
\item[Case 2:] $i=s$ and $j=r$\\
Since $m \geq 4$ we can choose $k,l \in \{1, \ldots, m\} \setminus \{i,j\}, k \neq l$. Here we apply the previous corollary to the elements $a_1=\alpha_i^j, a_2=\alpha_j^i$ and ${b_1=\alpha_k^l,}$ $b_2=\alpha_l^k.$ Since $\{i,j\} \cap \{k,l\} = \emptyset$ the elements $a_1,a_2$ commute with $b_1$ and $b_2$. Thus, $a_p$ and $b_{p'}$ have a common fixed point for $p,p' \in \{1,2\}$.
\begin{figure}[H]
\begin{center}
\begin{tikzpicture}[scale=0.5]
\draw[fill=black]  (2,2) circle (1pt);
\draw[fill=black]  (2,-2) circle (1pt);
\draw[fill=black]  (-2,2) circle (1pt);
\draw[fill=black]  (-2,-2) circle (1pt);
\draw[fill=black]  (2,2)--(-2, 2);
\draw[fill=black]  (2,2)--(2, -2);
\draw[fill=black]  (-2,-2)--(-2, 2);
\draw[fill=black]  (-2,-2)--(2, -2);
\draw[dotted]  (-2,-2)--(2, 2);
\draw[dotted]  (-2,2)--(2, -2);
\node (a1) at (2,2) [label=right:$\alpha_j^i$]{};
\node (a2) at (2,-2) [label=right:$\alpha_k^l$]{};
\node (a3) at (-2,-2) [label=left:$\alpha_i^j$]{};
\node (a4) at (-2,2) [label=left:$\alpha_l^k$]{};
\end{tikzpicture}
\end{center}
\end{figure} 
By Corollary 5.4, either $\mathrm{Fix}(a_1) \cap \mathrm{Fix}(a_2)$ or ${\mathrm{Fix}(b_1) \cap \mathrm{Fix}(b_2) }$ is non-empty. We conclude with the same argument as in case 1 by taking ${\pi \in \mathrm{Aut}(G)}$ to be the automorphism induced by interchanging $x_i$ with $x_k$ and $x_j$ with $x_l$.
\item[Case 3:] $i \neq s$ and $j=r$\\
We use the previous corollary in the situation $a_1=\alpha_i^j, a_2=\alpha_j^s$ and $b_1=\alpha_j^i, b_2=\alpha_s^j$. The elements $a_k$ and $b_k$ have a common fixed point for $k=1,2$ by case 2. Moreover, $b_1=\alpha_j^i$ has a common fixed point with $a_2=\alpha_j^s$ by Case 1. Finally, $a_1=\alpha_i^j$ and $b_2=\alpha_s^j$ have the same operating letter $x_j$. Therefore, $a_1$ and $b_2$ commute and hence have a common fixed point.
\begin{figure}[H]
\begin{center}
\begin{tikzpicture}[scale=0.5]
\draw[fill=black]  (2,2) circle (1pt);
\draw[fill=black]  (2,-2) circle (1pt);
\draw[fill=black]  (-2,2) circle (1pt);
\draw[fill=black]  (-2,-2) circle (1pt);
\draw[fill=black]  (2,2)--(-2, 2);
\draw[fill=black]  (2,2)--(2, -2);
\draw[fill=black]  (-2,-2)--(-2, 2);
\draw[fill=black]  (-2,-2)--(2, -2);
\draw[dotted]  (-2,-2)--(2, 2);
\draw[dotted]  (-2,2)--(2, -2);
\node (a1) at (2,2) [label=right:$\alpha_j^s$]{};
\node (a2) at (2,-2) [label=right:$\alpha_j^i$]{};
\node (a3) at (-2,-2) [label=left:$\alpha_i^j$]{};
\node (a4) at (-2,2) [label=left:$\alpha_s^j$]{};
\end{tikzpicture}
\end{center}
\end{figure}
We argue again as in case 1 taking $\pi \in \mathrm{Aut}(G)$ to be the automorphism induced by the transposition of the generators $x_i$ and $x_s$.
\end{itemize}
So any two partial conjugations $\alpha_i^{j}$ and $\alpha_{r}^s$ have a common fixed point. By Corollary \ref{corollary:helly}, this implies that the subgroup $\mathrm{FR}(G)$ fixes a vertex in $T$.\\
Since $\mathrm{FR}(G)$ is normalised by factor automorphisms and permutations by Lemma \ref{lemma:FRnormal}, the subgroup $F \rtimes \Sym{m} \subseteq \Aut{G}$  stabilises the fixed point tree $T^{\mathrm{FR}(G)}$. Since $F \rtimes \Sym{m}$ is finite, it fixes a vertex $v$ in $T^{\mathrm{FR}(G)}$. Then, $v$ is a global fixed point of $\mathrm{Aut}(G)$ in $T$ by Lemma \ref{lemma:autstructure}.\\
Since the action was arbitrary this shows that $\mathrm{Aut}(G)$ satisfies property (FA).
\end{proof}

\section{Generalisation}
In this section, we generalise the results of the previous sections to the situation of \emph{mixed} free products of finite cyclic groups where the order of the free factors might vary.
Let $m \geq 2$ and $n_1, \ldots, n_m \geq 2$ be natural numbers. Let $G$ be the free product of $m$ finite cyclic groups $\mathbb{Z}/n_1\mathbb{Z}, \ldots , \mathbb{Z}/n_m\mathbb{Z}$.

\begin{definition}
We say that the cyclic group of order $k$ \emph{occurs} $j$ times in $G$ if $\# \{i \in \{1, \ldots, m\} \mid n_i=k\} =j$.
\end{definition}

The next lemma shows that the number of occurencies is an invariant of the group $G$, i.e. it does not depend on the chosen decomposition of $G$ as a free product of finite cyclic groups.

\begin{lemma}
Let $l \geq 2$ and $r_1, \ldots, r_l \geq 2$ be natural numbers such that ${G \cong \ast_{i=1}^{l}\mathbb{Z}/r_i\mathbb{Z}}$. Then we have $l=m$ and there exists $\pi \in \mathrm{Sym}(m)$ such that $r_i=n_{\pi(i)}$ for $i=1, \ldots, m$.
\end{lemma}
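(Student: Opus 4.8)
The plan is to prove uniqueness of the free product decomposition into finite cyclic groups by combining two classical facts: Grushko's theorem (or the Kurosh subgroup theorem), which gives uniqueness of free product decompositions of a finitely generated group up to reordering and conjugation of the freely indecomposable factors, together with the observation that a finite cyclic group $\mathbb{Z}/k\mathbb{Z}$ with $k\geq 2$ is freely indecomposable (it has no proper free product decomposition since it is finite and non-trivial, and a non-trivial free product is always infinite). Since $G$ is finitely generated (generated by the $m$ elements $x_i$) and torsion-free quotients play no role here, this is the natural framework.

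First I would recall the Kurosh/Grushko uniqueness statement: if $G = \ast_{i=1}^m H_i = \ast_{j=1}^l K_j$ are two decompositions of a finitely generated group as free products of freely indecomposable non-trivial factors, then $m = l$ and, after reordering, each $H_i$ is conjugate in $G$ to $K_i$. Next I would verify the hypothesis that each $\mathbb{Z}/n_i\mathbb{Z}$ and each $\mathbb{Z}/r_j\mathbb{Z}$ is freely indecomposable: a finite group cannot be written as a non-trivial free product, because a free product $A \ast B$ with $A, B$ both non-trivial contains a free subgroup of rank $2$ (or at least is infinite) whenever $|A|,|B|\geq 2$; and a cyclic group of order $\geq 2$ is non-trivial, so it admits no non-trivial free product decomposition. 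Applying the uniqueness theorem then yields $l = m$ and a permutation $\pi \in \mathrm{Sym}(m)$ with $\langle x_{\pi(i)}\rangle$ conjugate to the $i$-th factor of the second decomposition; since conjugate groups are isomorphic, $\mathbb{Z}/r_i\mathbb{Z} \cong \mathbb{Z}/n_{\pi(i)}\mathbb{Z}$, hence $r_i = n_{\pi(i)}$.

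Alternatively, if one prefers to avoid invoking Grushko/Kurosh directly, one could argue more hands-on using the structure of torsion in free products: by the Kurosh subgroup theorem every finite subgroup of $G = \ast_{i=1}^m \mathbb{Z}/n_i\mathbb{Z}$ is conjugate into one of the free factors, so the set of orders of maximal finite subgroups of $G$ (up to conjugacy) is exactly the multiset $\{n_1, \ldots, n_m\}$; applying the same reasoning to the second decomposition gives that this multiset also equals $\{r_1, \ldots, r_l\}$, which forces $l = m$ and the existence of the desired $\pi$. One must be slightly careful that distinct free factors really give distinct conjugacy classes of maximal finite subgroups and that no two $\langle x_i\rangle, \langle x_j\rangle$ for $i\neq j$ are conjugate — but this is precisely the kind of computation already carried out in the proof of Lemma \ref{lemma:autstructure} via the retractions $\pi_i : G \to \langle x_i\rangle$, so it can be cited.

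The main obstacle, such as it is, is purely expository: deciding how much of the Kurosh subgroup theorem / Grushko's theorem to take as a black box versus reprove. Since this is a standard result in combinatorial group theory, I expect the cleanest route is to cite it and spend the bulk of the (short) proof on verifying freely indecomposability of the cyclic factors, which is elementary. No genuinely new difficulty arises; the lemma is essentially a bookkeeping consequence of well-known structure theory for free products, included here only to justify that "number of occurrences" is a well-defined invariant of $G$.
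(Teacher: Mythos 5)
Your argument is correct, but it follows a different route from the paper. You prove the lemma by citing the classical uniqueness theory for free product decompositions: either the Grushko/Kurosh uniqueness statement for decompositions into freely indecomposable factors (plus the elementary fact that a finite group is freely indecomposable, since a non-trivial free product is infinite), or, in your alternative, the Kurosh subgroup theorem at the level of subgroups, identifying the conjugacy classes of maximal finite subgroups with the free factors. The paper instead works one level lower, with elements rather than subgroups: it invokes only the Torsion Theorem for free products (\cite{LS77}), by which every element of finite order is conjugate into a unique free factor (uniqueness via the same retractions $\pi_i$ you cite from the proof of Lemma~\ref{lemma:autstructure}), and then extracts the multiplicity of $\mathbb{Z}/k\mathbb{Z}$ from the counting function $c(k)$ of conjugacy classes of elements of order $k$, via the formula $c(k)-\sum_{a>1}c(ak)$. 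Your first route is shorter if one is willing to take the uniqueness theorem as a black box, but note that the version you state (each factor conjugate to a factor of the other decomposition) is only valid when no infinite cyclic factors occur -- for free groups conjugacy of the factors fails -- which is harmless here since all factors are finite, but deserves a word; your second route is essentially the paper's argument promoted from torsion elements to maximal finite subgroups, and needs the same non-conjugacy verification the paper carries out. What the paper's element-level count buys is self-containedness: it needs no subgroup-level Kurosh theory, only the Torsion Theorem and the retraction argument already available in Section~2.
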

\begin{proof}
By the Torsion Theorem for Free Products (Theorem 1.6, p.177 in \cite{LS77}), any element $g \in G$ of finite order is conjugate to an element of a free factor $\mathbb{Z}/n_i\mathbb{Z}$. Moreover, this free factor is uniquely determined as no non-trivial elements of different free factors are conjugate to one another. (For ${i=1, \ldots, m}$ we can define a homomorphism $G \rightarrow \mathbb{Z}/n_i\mathbb{Z}$ (as in the Proof of Lemma 2.9) whose restriction to the free factor $\mathbb{Z}/n_i\mathbb{Z}$ is injective while all other free factors lie in the kernel.)\\
As a consequence, there is only a finite set of natural numbers which appear as orders of elements in $G$, namely all the divisors of the $n_i, i=1, \ldots, m$. By the previous paragraph, this implies that there are also finitely many conjugacy classes of elements of finite order in $G$.\\
Let $k \in \mathbb{N}$. We denote by $c(k)$ the number of conjugacy classes of elements of order $k$. So $c(k)$ is an algebraic invariant of $G$.\\
A cyclic free factor $\mathbb{Z}/n_i\mathbb{Z}$ of $G$ contains an element of order $k$ if and only if $k$ divides $n_i$. Therefore the number of occurencies of the cyclic group of order $k$ in the decomposition $G = \ast_{i=1}^{m}\mathbb{Z}/n_i\mathbb{Z}$ equals $c(k)- \sum \limits_{a \in \mathbb{N}_{> 1}} c(a k)$.\\
Applying the same argument to the decomposition $G \cong \ast_{i=1}^{l}\mathbb{Z}/r_i\mathbb{Z}$, we obtain that for each $k \in \mathbb{N}$ the cyclic group of order $k$ occurs equally often in both decompositions of $G$. 
\end{proof}

In the following we give a classification depending on the number of occurencies whether the automorphism group of the free product $G$ has property FA. The next theorem is Theorem 1.2 from the introduction.

\begin{theorem}
Let $G=\ast_{i=1}^m \mathbb{Z}/n_i\mathbb{Z}$ be a free product of at least two non-trivial finite cyclic groups. If either a free factor $\mathbb{Z}/n\mathbb{Z}$ occurs exactly two or three times or if two different free factors $\mathbb{Z}/n_1\mathbb{Z}$ and $\mathbb{Z}/n_2\mathbb{Z}$ occur exactly once, then $\Aut{G}$ does not have FA.
\end{theorem}
\begin{proof}
Let $k \in \mathbb{N}, k \geq 2$ and assume $k$ occurs at least once in $G$. Let $N(k)$ be the normal subgroup generated by all free factors of order $k$. For the proof of the theorem we need the following claim.\\
\textbf{Claim}: $N(k)$ is a characteristic subgroup of $G$.\\
By Remark \ref{remchar} $iii)$, it is sufficient that $\varphi(N(k)) \subseteq N(k)$ where $\varphi \in \Aut{G}$ is any generator from the list given in Lemma \ref{lemma:gen}.\\
If $\varphi$ is a partial conjugation, it maps each free factor to a conjugate which immediately implies $\varphi(N(k)) \subseteq N(k)$.\\
If $\varphi$ is a factor automorphism it stabilises each free factor and nothing has to be proven. If $\varphi$ is a permutation it permutes free factors of order $k$ and therefore stabilises $N(k)$ as well. Hence, $N(k)$ is a characteristic subgroup of $G$.\\
Assume first that a free factor $\mathbb{Z}/n\mathbb{Z}$ occurs twice or thrice in the decomposition of $G$. For each $k \neq n$ which occurs in $G$ the subgroup $N(k)$ is characteristic in $G$. By Remark \ref{remchar} $ii)$, the subgroup $N$ generated by all $N(k)$ with $k \neq n$ is characteristic in $G$. The quotient $G/N$ is isomorphic to $\Aut{\mathbb{Z}/n\mathbb{Z}^{\ast l}}$ for $l \in \{2,3\}$.
By Lemma \ref{lemma:inducedmap}, we obtain an induced map $\Aut{G} \rightarrow \Aut{\mathbb{Z}/n\mathbb{Z}^{\ast l}}$ which is an epimorphism. By Observation \ref{obs:FA} $i)$, property FA is preserved by quotients. Since $\Aut{\mathbb{Z}/n\mathbb{Z}^{\ast l}}$ does not have property FA by Theorem 1.1, also $\Aut{G}$ does not have property FA.\\
If two free factors of orders $n_1$ and $n_2$ with $n_1 \neq n_2$ occur exactly once we proceed in analogous fashion. We can factor out the characteristic subgroup generated by all $N(k)$ for $k \notin \{n_1,n_2\}$. Thereby, we obtain an epimorphism $\Aut{G} \rightarrow \Aut{\mathbb{Z}/n_1\mathbb{Z} \ast \mathbb{Z}/n_2\mathbb{Z}}$. By Remark 3.4, $\Aut{\mathbb{Z}/n_1\mathbb{Z} \ast \mathbb{Z}/n_2\mathbb{Z}}$ does not have FA. So by the same argument as above, $\Aut{G}$ does not have FA.
\end{proof}

The following theorem is Theorem 1.3 from the introduction.

\begin{theorem}
Let $G=\ast_{i=1}^m \mathbb{Z}/n_i\mathbb{Z}$ be a free product of finite cyclic groups. If each free factor $\mathbb{Z}/n\mathbb{Z}$ occurs at least four times, then $\Aut{G}$ has FA.
\end{theorem}
\begin{proof}
We follow closely the proof of Theorem \ref{theorem:mgeq4} from the previous section:\\
Let $\mathrm{Aut}(G)$ act on a tree $T$. The first step is to show that the Fouxe-Rabinovitch subgroup $\mathrm{FR}(G)$ has a global fixed point in $T$.\\
By Corollary \ref{corollary:helly}, it is sufficent to prove that any two partial conjugations $\alpha_i^{j}$ and $\alpha_{r}^s$ have a common fixed point.\\ 
Note that each partial conjugation $\alpha_{i}^j$ has finite order $n_j$ and therefore must fix a vertex $v \in T$. If we have $s=j$ or $\{i,j\} \cap \{r,s\} = \emptyset$, then the partial conjugations $\alpha_i^j$ and $\alpha_r^s$ commute and therefore have a common fixed point in $T$. Otherwise we have again three different cases:
\begin{itemize}
\item[Case 1:] $i=r$ and $s \neq j$\\
Since $n_i$ occurs at least four times there exists $l \in \{1, \ldots m\} \setminus \{i,j,s\}$ with $n_l=n_i$. We can apply the Corollary \ref{corollary:diagonal} to the elements $a_1=\alpha_i^j, a_2=\alpha_i^s$ and $b_1=\alpha_l^j, b_2=\alpha_l^s$. The elements $a_k$ and $b_k$ commute for $k=1,2$ because they have the same operating letter ($x_j$ for $k=1$ resp. $x_s$ for $k=2$). Since $\{i,j\} \cap \{l,s\}=\emptyset$ and $\{i,s\} \cap \{j,l\}=\emptyset$ also $a_1$ commutes with $b_2$ and $b_1$ commutes with $a_2$. Thus, $a_k$ and $b_{k'}$ have a common fixed point for $k,k' \in \{1,2\}$.
\begin{figure}[H]
\begin{center}
\begin{tikzpicture}[scale=0.5]
\draw[fill=black]  (2,2) circle (1pt);
\draw[fill=black]  (2,-2) circle (1pt);
\draw[fill=black]  (-2,2) circle (1pt);
\draw[fill=black]  (-2,-2) circle (1pt);
\draw[fill=black]  (2,2)--(-2, 2);
\draw[fill=black]  (2,2)--(2, -2);
\draw[fill=black]  (-2,-2)--(-2, 2);
\draw[fill=black]  (-2,-2)--(2, -2);
\draw[dotted]  (-2,-2)--(2, 2);
\draw[dotted]  (-2,2)--(2, -2);
\node (a1) at (2,2) [label=right:$\alpha_i^s$]{};
\node (a2) at (2,-2) [label=right:$\alpha_l^j$]{};
\node (a3) at (-2,-2) [label=left:$\alpha_i^j$]{};
\node (a4) at (-2,2) [label=left:$\alpha_l^s$]{};
\end{tikzpicture}
\end{center}
\end{figure} 
By Corollary 5.4, either $\mathrm{Fix}(a_1) \cap \mathrm{Fix}(a_2)$ or ${\mathrm{Fix}(b_1) \cap \mathrm{Fix}(b_2) }$ is non-empty. Let $\pi \in \mathrm{Aut}(G)$ be the automorphism induced by the transposition of the generators $x_i$ and $x_l$ (well-defined since $n_i=n_l$).\\
Then we have $b_k=\pi a_k \pi^{-1}$ and hence ${\mathrm{Fix}(b_k)=\pi\mathrm{Fix}(a_k)}$  for $k=1,2$. Therefore $\mathrm{Fix}(a_1) \cap \mathrm{Fix}(a_2)$  is non-empty if and only if $\mathrm{Fix}(b_1) \cap \mathrm{Fix}(b_2)$  is non-empty. Since one of those intersections must be non-empty, both of them are non-empty and $\alpha_i^j$ and $\alpha_i^s$ have a common fixed point in $T$.
\item[Case 2:] $i=s$ and $j=r$\\
Since $n_i$ and $n_j$ occur at least four times, we can choose two indices ${k,l \in \{1, \ldots, m\} \setminus \{i,j\},}$ $k \neq l$ with $n_k=n_i$ and $n_l=n_j$. Here we apply Corollary \ref{corollary:diagonal} to the elements $a_1=\alpha_i^j, a_2=\alpha_j^i$ and ${b_1=\alpha_k^l,b_2=\alpha_l^k.}$ Since $\{i,j\} \cap \{k,l\} = \emptyset$ the elements $a_1,a_2$ commute with $b_1$ and $b_2$. Thus, $a_p$ and $b_{p'}$ have a common fixed point for $p,p' \in \{1,2\}$.
\begin{figure}[H]
\begin{center}
\begin{tikzpicture}[scale=0.5]
\draw[fill=black]  (2,2) circle (1pt);
\draw[fill=black]  (2,-2) circle (1pt);
\draw[fill=black]  (-2,2) circle (1pt);
\draw[fill=black]  (-2,-2) circle (1pt);
\draw[fill=black]  (2,2)--(-2, 2);
\draw[fill=black]  (2,2)--(2, -2);
\draw[fill=black]  (-2,-2)--(-2, 2);
\draw[fill=black]  (-2,-2)--(2, -2);
\draw[dotted]  (-2,-2)--(2, 2);
\draw[dotted]  (-2,2)--(2, -2);
\node (a1) at (2,2) [label=right:$\alpha_j^i$]{};
\node (a2) at (2,-2) [label=right:$\alpha_k^l$]{};
\node (a3) at (-2,-2) [label=left:$\alpha_i^j$]{};
\node (a4) at (-2,2) [label=left:$\alpha_l^k$]{};
\end{tikzpicture}
\end{center}
\end{figure} 
By Corollary 5.4, either $\mathrm{Fix}(a_1) \cap \mathrm{Fix}(a_2) $ or $\mathrm{Fix}(b_1) \cap \mathrm{Fix}(b_2) $ is non-empty. We conclude with the same argument as in case 1 by taking $\pi \in \mathrm{Aut}(G)$ to be the automorphism induced by interchanging $x_i$ with $x_k$ and $x_j$ with $x_l$.
\item[Case 3:] $i \neq s$ and $j=r$\\
As $n_j$ occurs at least four times there exists $l \in \{1, \ldots, m\} \setminus \{i,j,s\}$ with $n_l=n_j$. We use Corollary \ref{corollary:diagonal} in the situation $a_1=\alpha_i^j, a_2=\alpha_j^s$ and $b_1=\alpha_l^i, b_2=\alpha_s^l$. The elements $a_1, b_2$ resp. $a_2,b_1$ have a common fixed point since they commute. Moreover, $a_1$ and $b_1$ have a common fixed point by case 1. The elements $a_2$ and $b_2$ have the same operating letter, hence also have a common fixed point.
\begin{figure}[H]
\begin{center}
\begin{tikzpicture}[scale=0.5]
\draw[fill=black]  (2,2) circle (1pt);
\draw[fill=black]  (2,-2) circle (1pt);
\draw[fill=black]  (-2,2) circle (1pt);
\draw[fill=black]  (-2,-2) circle (1pt);
\draw[fill=black]  (2,2)--(-2, 2);
\draw[fill=black]  (2,2)--(2, -2);
\draw[fill=black]  (-2,-2)--(-2, 2);
\draw[fill=black]  (-2,-2)--(2, -2);
\draw[dotted]  (-2,-2)--(2, 2);
\draw[dotted]  (-2,2)--(2, -2);
\node (a1) at (2,2) [label=right:$\alpha_j^s$]{};
\node (a2) at (2,-2) [label=right:$\alpha_i^l$]{};
\node (a3) at (-2,-2) [label=left:$\alpha_i^j$]{};
\node (a4) at (-2,2) [label=left:$\alpha_l^s$]{};
\end{tikzpicture}
\end{center}
\end{figure}
We argue again as in case 1 taking $\pi \in \mathrm{Aut}(G)$ to be the automorphism induced by the transposition of the generators $x_j$ and $x_l$.
\end{itemize}
So any two partial conjugations $\alpha_i^{j}$ and $\alpha_{r}^s$ have a common fixed point which implies that $\mathrm{FR}(G)$ fixes a vertex in $T$.\\ 
Since $\mathrm{FR}(G)$ is normalised by factor automorphisms and permutations by Lemma \ref{lemma:FRnormal}, the subgroup $F \rtimes \Sym{m} \subseteq \Aut{G}$  stabilises the fixed point tree $T^{\mathrm{FR}(G)}$. Since $F \rtimes \Sym{m}$ is finite, it fixes a vertex $v$ in $T^{\mathrm{FR}(G)}$. Then, $v$ is a global fixed point of $\mathrm{Aut}(G)$ in $T$ by Lemma \ref{lemma:autstructure}.\\
Since the action was arbitrary this shows that $\mathrm{Aut}(G)$ satisfies property (FA). 
\end{proof}

\newpage

\end{document}